\newtheorem{defn}{Definition}[section]
\newtheorem{thm}[defn]{Theorem}
\newtheorem{lem}[defn]{Lemma}
\newtheorem{exam}[defn]{Example}
\newcommand\B{\mathbf{B}}
\author{Byung Hee An}
\address{Department of Mathematics, POSTECH, Pohang, 790-784, Korea}
\email{anbyhee@postech.ac.kr}
\author{Ki Hyoung Ko}
\address{Department of Mathematics, KAIST, Daejeon, 307-701, Korea}
\email{knot@kaist.ac.kr}
\thanks{This work was supported by Priority Research Centers Program through the National Research Foundation of Korea (NRF) funded by the Ministry of Education, Science and Technology (2009-0094069)}
\begin{document}
\title[]{A family of pseudo-Anosov braids with large conjugacy invariant sets}
\begin{abstract}
We show that there is a family of pseudo-Anosov braids independently parameterized by the braid index and the (canonical) length whose smallest conjugacy invariant sets grow exponentially in the braid index and linearly in the length and conclude that the conjugacy problem remains exponential in the braid index under the current knowledge.
\end{abstract}
\subjclass[2010]{Primary 20F36, 20F10, 57M25}
\keywords{conjugacy problem, braid group, pseudo-Anosov braid}
\maketitle

\section{Introduction and Preliminaries}

We use the following Artin's presentation \cite{Art} of the group $\B_n$ of $n$-braids:
$$
\B_n=\left\langle
\sigma_1,\dots,\sigma_{n-1}\big\vert
\begin{array}{ll}
\sigma_i\sigma_j=\sigma_j\sigma_i & \text{if }|i-j|>1\\
\sigma_i\sigma_j\sigma_i=\sigma_j\sigma_i\sigma_j & \text{if }|i-j|=1
\end{array}
\right\rangle,
$$
An $n$-braid in the monoid $B_n^+$ of positive words is a {\em permutation} braid if a pair of $n$-strands crosses at most once. The set $\mathbf S_n$ of
permutation $n$-braids is in one-to-one correspondence with the permutation group of $n$ elements. For $\beta\in\mathbf{S}_n$, the {\em starting set $S(\beta)$} and the {\em finishing set $F(\beta)$} is given by
$$
S(\beta)=\left\{ i | \beta=\sigma_i\beta' \text{ for some }\beta'\in\mathbf{S}_n\right\},
F(\beta)=\left\{ i | \beta=\beta'\sigma_i \text{ for some }\beta'\in\mathbf{S}_n\right\}.
$$
The half-twist braid $\Delta$ is the only braid in $\mathbf S_n$ whose starting and finishing sets are the largest set $\{1,\ldots,n-1\}$. Every $n$-braid can be put into its {\em left canonical form}, that is,
$$\beta=\Delta^k \beta_1\cdots \beta_\ell$$
where $\beta_i\in \mathbf S_n$ and $F(\beta_i)\supset S(\beta_{i+1})$ for all $i=1,\ldots,\ell-1$. The integers $\ell$, $k$, and $k+\ell$ are called the {\em canonical length}, the {\em infimum}, and the {\em supremum} of  $\beta$, respectively. This decomposition introduced by Garside \cite{Gar} immediately solves the word problem of $B_n$ and Garside showed that the {\em summit set}
$SS(\beta)$, the set of all braids with the maximal infimum and conjugate to a $\beta\in\B_n$, is finite to give a theoretical solution to the conjugacy problem.

Since then, several smaller
sets that are invariant under conjugate have been proposed.
Elrifai and Morton \cite{EM} studied {\em super summit
set $SSS(\beta)$}, the set of all braids with the maximal infimum and the minimal supremum and conjugate to $\beta$, and
proved that any braid can be pushed into its super summit set
through iteration of two natural operations on a braid $\beta=\Delta^k \beta_1\cdots \beta_\ell$ given by its canonical form. The cycling of $\beta$ is given by
$$\mathbf c(\beta)=\Delta^k \beta_2\cdots \beta_\ell\tau^k(\beta_1)$$
and the decycling of $\beta$ is given by
$$\mathbf d(\beta)=\Delta^k \tau^k(\beta_\ell)\beta_1\cdots \beta_{\ell-1}$$
where $\tau:\B_n\to\B_n$ be the involution defined by $\tau(\beta)=\Delta^{-1}\beta\Delta$.
They also introduce a partial order $\prec$ among positive $n$-braids such that $\alpha\prec\beta$ if $\alpha$ is a prefix of $\beta$, that is, $\alpha\gamma=\beta$ for some $\gamma\in B_n^+$.
In his thesis, Lee consider the {\em reduced super summit set $RSSS(\beta)$} of all braids conjugate to $\beta$ and fixed by iterated cycling and decycling. On the other hand, Franco and Gonz\'alez-Meneses \cite{FG} gave
a practical algorithm to generate $SSS(\beta)$ by using the following lattice structure on $S_n$ with respect to the partial order $\prec$. Given $\gamma_1, \gamma_2\in S_n$, the {\em meet} $\gamma_1\wedge\gamma_2$ is the maximal common prefix of $\gamma_1$ and $\gamma_2$ and the {\em join} $\gamma_1\vee\gamma_2$ is the minimal braid that has prefixes $\gamma_1$ and $\gamma_2$, and two operations can be extended over $B_n^+$. In fact, they showed that the meet operation $\wedge$ is closed among conjugators of $\beta$ that produce elements in $SSS(\beta)$ and so there are minimal conjugators moving around $SSS(\beta)$.
Gebhardt \cite{Geb} proposed the {\em
ultra summit set $USS(\beta)$} that collects all braids in $SSS(\beta)$ and fixed by iterated cycling and proved that the algorithm using
minimal conjugators also works to generate $USS(\beta)$. Finally, Gebhardt and Gonz\'alez-Meneses \cite{GG, GG2} introduced a new conjugation operation called a {\em cyclic sliding} and considered the set $SC(\beta)$, called the {sliding circuit}, of orbits of $\beta$ under the operation. These finite conjugacy invariant sets of a given braid $\beta$ in general satisfy
$$
SC(\beta)\subset RSSS(\beta)\subset USS(\beta)\subset SSS(\beta)\subset SS(\beta).
$$

A braid $\beta=\Delta^k A_1\cdots A_\ell$ given in canonical form is {\em rigid} if $\ell\ge 2$ and $F(A_1)\supset S(\tau^k(A_{\ell}))$. If a braid $\beta$ is conjugate to a rigid braid $\alpha$, then $\alpha\in RSSS(\beta)$ and moreover it was shown in \cite{BGG1,GG, KL} that  $SC(\beta)=RSSS(\beta)=USS(\beta)=R(\beta)$ where $R(\beta)$ denotes the set of all rigid braids conjugate to $\beta$. When the canonical length $\ell$ is 1, cycling and decycling is meaningless. If we assume that the rigidity requires $F(A_1)\supset S(\tau^k(A_{1}))$ in this case, one can show that $SC(\beta)=SSS(\beta)=R(\beta)$. For a rigid braid $\beta$, $R(\beta)$ is the smallest among invariant conjugacy sets of $\beta$ under current knowledge and will be called the {\em rigid conjugacy set} of $\beta$.

As a self-homeomorphism on $n$-punctured 2-disk, an $n$-braid
$\beta$ is of one of the following three dynamic types known as {\em
Nielsen-Thurston classification} \cite{Thu1}: (1)
 $\beta$ is {\em periodic} if some power of $\beta$ is isotopic to
some power of Dehn twist along the boundary of disk; (2) $\beta$ is
{\em reducible} if there exists a family of pairwise disjoint
essential non-peripheral simple closed curves in $D_n$, called {\em
reduction system}, which is preserved by a $n$-braid isotopic to
$\beta$; (3) $\beta$ is {\em pseudo-Anosov} is neither (1) nor (2).

The conjugacy problem for periodic braids is relatively easy \cite{Gon, BGG3}.
For reducible braids, once the reduction system is known, the conjugacy problem can be reduced into several pieces, which are for either periodic or pseudo-Anosov braids. It is reasonable to say that the conjugagacy problem for pseudo-Anosov braids is essential.

If a reduction system consists of circles that intersect the axis containing all punctures exactly twice, then it is called a {\em standard reduction system}. Bernardete, Nitecki and GuitierrezIt \cite{BNG} showed that cycling and decycling operations retain a standard reduction system and so $RSSS(\beta)$ must contain a braid with a standard reduction system if $\beta$ is reducible. This also implies that if a braid has a standard reduction system then each factor of its canonical form retain a standard reduction system and so a standard reduction system can be detected in polynomial time \cite{LL}.

For pseudo-Anosov braids, Masur and Minsky \cite{MM1,MM2} showed that there exists a constant $K$ which depends only on the braid index $n$ such that
for any conjugate $\alpha,\beta\in\B_n$, there exists a conjugator $\gamma$ satisfying
$|\gamma|\le K(|\alpha|+|\beta|)$.
Even though their proof leaves no clue how to find such $\gamma$, the conjugacy problem for pseudo-Anosov braids seems to have a low complexity in length.
On the other hand, Ko and Lee in \cite{KL} and Birman, Gebhardt and Gonz\'alez-Meneses in \cite{BGG1} independently proved that a pseudo-Anosov braid becomes rigid up to taking power and cycling. We remark that the exponent of the power needed has an upper bound that is a polynomial only in the braid index but the number of cycling required is unknown even though it is relatively small on computer experiment.
Consequently, the conjugacy problem for pseudo-Anosov braids boils down to that for rigid pseudo-Anosov braids. For rigid braids, the complexity of the current best solution ultimately relies on the size of the rigid conjugacy set of an input braid.

In this article, we show that there is a family of pseudo-Anosov braids independently parameterized by the braid index and the (canonical) length whose rigid conjugacy sets grow exponentially in the braid index and linearly in the length and conclude that the conjugacy problem remains exponential in the braid index under current knowledge. Our strategy is to construct a family of rigid $n$-braids of canonical length $k$ from the set of $k\times\lfloor \frac n2\rfloor$ binary matrices. Then we show that each member of the family carries no standard reduction system and the rigid conjugacy set of each member is a subset of the family and therefore each member is pseudo-Anosov. Finally we compute the size of the rigid conjugacy set of each member and conclude that the size of the rigid conjugacy set of a well chosen member is still exponential in $n$.

There have been some results relevant to our work. Prasolov \cite{Pr} constructed a family of rigid pseudo-Anosov $n$-braids of canonical length $n$ whose rigid conjugacy sets grow exponentially in $n$. This family obviously has the limitation that the canonical length depends on the braid index. On the other hand, Gonz\'alez-Meneses in \cite{Gon2} constructed a family of reducible $n$-braids of canonical length $k$ whose sliding circuits grow exponentially in both $n$ and $k$.

\section{Braids corresponding to binary matrix}
Let $\left\{ 0,1 \right\}^p$ be the set of binary $p$-tuples and $\mathbf{a}\in\left\{ 0,1 \right\}^p$.
Using the four building blocks $\alpha_I, \alpha_0, \alpha_1$ and $\alpha_T$ depicted in Figure~\ref{fig:buildingblocks}, we construct a braid $\alpha(\mathbf a)$ for a given $\mathbf a=(a_1,\ldots,a_p)\in \{ 0,1 \}^p$ by gluing adjacent blocks together by identifying circled crossings in the row $\alpha_I,\alpha_{a_1},\ldots,\alpha_{a_p},\alpha_T$ of building blocks.
Then $\alpha(\mathbf{a})$ is a positive $(2p+2)$-braid of $4p+1$ crossings in which every pair of strands crosses each other at most once. This can be seen, for example, by the fact that every strand can be drawn by a straight line. Thus $\alpha(\mathbf{a})$ is a permutation braid and we obtain a function $\alpha:\left\{ 0,1 \right\}^p\to\mathbf{S}_{2p+2}$. Figure~\ref{fig:Examplebinarybraid} shows the braid $\alpha(0,1,1,0,0,1)$.

For any permutation braid $\beta$, we consider the induced permutation $\pi(\beta)$ defined by $\pi(\beta)(i)=j$ when $i$-th strand in $\beta$ ends at $j$-th position.
Unless confusion arises, we simply use the notation $\beta$ for $\pi(\beta)$ so that $\beta(i)=j$ whenever $\pi(\beta)(i)=j$.

For $\mathbf{a}\in\left\{ 0,1 \right\}^p$, the set of {\em transposing indices} $T(\mathbf{a})$ of $\mathbf{a}$ consists of all $1\le i\le 2p+2$ such that $|\alpha(\mathbf{a})(i)-i|=1$. For example, $T(0,1,1,0,0,1)=\{3,4,6,9,11,12\}$ and the thicker strands in Figure~\ref{fig:Examplebinarybraid} correspond to transposing indices. For each $i\in T(\mathbf{a})$, the $i$-th strand has no circled crossing in the braid $\alpha(\mathbf{a})$ and so $|T(\mathbf{a})|=p$. In fact, $i\in T(a_1,\ldots,a_p)$ iff for $i=2k$, $a_k=1$ and for $i=2k+1$, $a_k=0$. Thus the function $\alpha:\left\{ 0,1 \right\}^p\to\mathbf{S}_{2p+2}$ is injective.

We define a permutation $\vartheta(\mathbf{a})$ on $(p+2)$ elements 
as corresponding permutation braid is obtained from $\alpha(\mathbf{a})$ by removing strands corresponding to the transposing indices $T(\mathbf{a})$ and renumbering indices as $\{1,2,4,\cdots,2p,2p+1\}$ in order.
For example, in Figure~\ref{fig:Examplebinarybraid},
$\vartheta(\mathbf{a})=(2,8,10,13,12,6,4,1)$
written as a cycle.

The following Lemma states basic properties of $\alpha(\mathbf{a})$ that can be easily derived from the picture of $\alpha(\mathbf{a})$ using building blocks.

\begin{lem}
\label{lem:BasicProperty}
For $\mathbf{a}\in\{0,1\}^p$ with $p\ge 2$, let $\vartheta=\vartheta(\mathbf{a})$.
Then the following holds.
\begin{enumerate}
\item For any $i$, the $i$-th strand cross the strand that ends at the $i$-th puncture in $\alpha(\mathbf{a})$.
\item If $a_i=0$ and $\vartheta(2i)=2j$, then $j$ is the smallest index $j$ such that $i<j$ and $a_j=0$.
\item The permutation $\vartheta$ is always a cycle of length $(p+2)$.
\end{enumerate}
\end{lem}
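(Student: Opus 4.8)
The plan is to read all three statements directly off the picture of $\alpha(\mathbf a)$ assembled from the four building blocks $\alpha_I,\alpha_0,\alpha_1,\alpha_T$, by tracing strands through the consecutive blocks in the row $\alpha_I,\alpha_{a_1},\dots,\alpha_{a_p},\alpha_T$. The one general fact used repeatedly is that, since $\alpha(\mathbf a)$ is a permutation braid whose strands may be drawn as straight line segments, two strands cross — necessarily exactly once — if and only if the left-to-right order of their top endpoints is opposite to that of their bottom endpoints.

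For (1), fix $i$ and write $\alpha=\alpha(\mathbf a)$. The $i$-th strand runs from the top position $i$ to the bottom position $\alpha(i)$, while the strand ending at the $i$-th puncture runs from $\alpha^{-1}(i)$ at the top to $i$ at the bottom. By the crossing criterion these two strands cross if and only if $\operatorname{sign}(\alpha(i)-i)=\operatorname{sign}(\alpha^{-1}(i)-i)$, that is, the $i$-th strand moves to the right exactly when the $i$-th puncture is reached from the right. So it suffices to check that the direction function $i\mapsto\operatorname{sign}(\alpha(i)-i)$ is unchanged when $\alpha$ is replaced by $\alpha^{-1}$. I would obtain this from a reflection symmetry of the construction: flipping the picture of $\alpha(\mathbf a)$ upside down realizes $\Rev$, hence the permutation $\alpha^{-1}$, and after an order-preserving relabeling of the positions the reflected picture is again assembled from the same four blocks (read in the reverse order), so that a block-by-block comparison with $\alpha(\mathbf a)$ shows the direction function is preserved. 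Should the reflection symmetry fail to be exact, statement (1) instead reduces to a short inspection of how each of the four blocks routes its strands.

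For (2), assume $a_i=0$, so that the position $2i$ lies in the domain of $\vartheta$. Tracing the strand that leaves the block $\alpha_{a_i}$ through the later blocks, and using the explicit routing inside $\alpha_0$ and $\alpha_1$: this strand is carried to the right across every block $\alpha_{a_j}$ with $a_j=1$ and is turned down — to the position relabeled $2j$ — at the first block $\alpha_{a_j}$ with $j>i$ and $a_j=0$; if there is no such $j$, it runs on into $\alpha_T$ and leaves at the position relabeled $2p+1$. This is exactly the assertion of (2).

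Statement (3) is the main point, and its proof assembles from (2) together with the analogous routing rules for the remaining strands, established in the same way: (i) if $a_i=1$, then $\vartheta(2i)=2j$ with $j$ the largest index $j<i$ such that $a_j=1$, and $\vartheta(2i)=1$ if there is none; and (ii) the two boundary strands satisfy $\vartheta(1)=2i_1$, where $i_1$ is the least index with $a_{i_1}=0$ (and $\vartheta(1)=2p+1$ if $\mathbf a$ has no zero entry), and $\vartheta(2p+1)=2j_s$, where $j_s$ is the greatest index with $a_{j_s}=1$ (and $\vartheta(2p+1)=1$ if $\mathbf a$ has no one entry). Listing the zero entries of $\mathbf a$ as $i_1<\dots<i_r$ and the one entries as $j_1<\dots<j_s$, so that $r+s=p$, these rules exhibit $\vartheta$ as the single orbit
$$
1\ \longrightarrow\ 2i_1\ \longrightarrow\ \dots\ \longrightarrow\ 2i_r\ \longrightarrow\ 2p+1\ \longrightarrow\ 2j_s\ \longrightarrow\ \dots\ \longrightarrow\ 2j_1\ \longrightarrow\ 1 ,
$$
which passes through all $2+r+s=p+2$ elements of the domain; hence $\vartheta$ is a $(p+2)$-cycle, the cases $r=0$ and $s=0$ merely collapsing one arc of the orbit and being covered by the boundary rule. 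The step that genuinely requires care is making these block-level routing rules precise for every pattern of zeros and ones — in particular, tracking how the positions along a strand are identified at the circled crossings when adjacent blocks are glued, and how deleting the $p$ transposing strands moves everything onto the relabeled index set $\{1,2,4,\dots,2p,2p+1\}$; once that bookkeeping is in place, (3) is just the orbit computation above.
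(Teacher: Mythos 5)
Your overall approach is the one the paper intends: the paper offers no proof of this lemma beyond the remark that it ``can be easily derived from the picture of $\alpha(\mathbf a)$ using building blocks,'' and your block-by-block routing argument is exactly that derivation carried out. Your routing rules are correct: they reproduce the paper's example $\vartheta(0,1,1,0,0,1)=(2,8,10,13,12,6,4,1)$ exactly (including $\vartheta(2i)=2p+1$ when no later zero exists, and the mirror rule for $a_i=1$, both of which the paper itself invokes later in the proof of Lemma~4.1), and your orbit $1\to 2i_1\to\cdots\to 2i_r\to 2p+1\to 2j_s\to\cdots\to 2j_1\to 1$ correctly yields (3), with the $r=0$ and $s=0$ degenerations handled. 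Your reduction of (1) to the identity $\operatorname{sign}(\alpha(i)-i)=\operatorname{sign}(\alpha^{-1}(i)-i)$ via the straight-line crossing criterion is also correct. The one step I would not rely on is the reflection-symmetry justification of that identity: flipping the diagram realizes $\Rev(\alpha(\mathbf a))$, whose starting set is $F(\alpha(\mathbf a))=\{1,2,4,\dots,2p,2p+1\}$ rather than $S(\alpha(\mathbf b))=\{2,4,\dots,2p\}$, so the flipped picture cannot be reassembled from the same four blocks and the symmetry is not exact. Your stated fallback is the argument to use, and it is short: from the transposing rule ($2k\mapsto 2k+1$ when $a_k=1$, $2k+1\mapsto 2k$ when $a_k=0$) together with your routing rules, the direction function $i\mapsto\operatorname{sign}(\alpha(i)-i)$ is independent of $\mathbf a$ --- it is $+$ precisely on $\{1,2,4,\dots,2p\}$ and $-$ on $\{3,5,\dots,2p+1,2p+2\}$ --- and the images of the left-moving strands are exactly the right-moving positions, which is precisely the required identity. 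With that substitution your proof is complete.
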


For positive integers $k$ and $p$,
let $M_{k,p}(\left\{ 0,1 \right\})$ denote the set of all $k\times p$ matrices over $\left\{ 0,1 \right\}$.
For $A\in M_{k,p}(\left\{ 0,1 \right\})$, let $A_i$ and $A^j$ denote the $i$-th row vector and the $j$-th column vector of $A$, respectively.
And let $\mathbf{0}$ and $\mathbf{1}$ denote the column vectors that consist of all 0's and all 1's, respectively.
We extend the function $\alpha$ over $M_{k,p}(\left\{ 0,1 \right\})$ via braid multiplication, that is,
$$\alpha(A)=\alpha(A_1)\cdots\alpha(A_k).$$

For $(A,b)\in M_{k,p}(\{0,1\})\times\{0,\cdots,p\}$, $\alpha(A,b)$ denotes the unique positive $(2p+3)$-braid obtained from $\alpha(A)$ by adding a straight vertical strand between the $(2b+1)$-st puncture and $(2b+2)$-nd puncture of $\alpha(A)$. We may recover $\alpha(A)$ by removing $(2b+2)$-nd strand from $\alpha(A,b)$.

For $n\ge 6$, define
$$
\mathcal{M}_{k,n}=
\begin{cases}
\{A\in M_{k,p}(\{0,1\})|A^1=\mathbf{0}, A^p=\mathbf{1}\},& n=2p+2 \\
\{(A, b)\in M_{k,p}(\{0,1\})\times\{0,\cdots,p\}|
A^1=\mathbf{0}, A^p=\mathbf{1}\},& n=2p+3.
\end{cases}
$$

We now have a function $\alpha:\mathcal{M}_{k,n}\to\B_n$ defined by
$\alpha(A)$ if $n$ is even and $\alpha(A,b)$ if $n$ is odd.
We sometimes use the notations $[A^1\cdots A^p]$ and $[A^1\cdots A^b|A^{b+1}\cdots A^p]$ to denote $\alpha(A)$ and $\alpha(A,b)$, respectively.

\begin{figure}[ht]
\begin{center}
\begin{tabular}{cccc}
\includegraphics[scale=0.85]{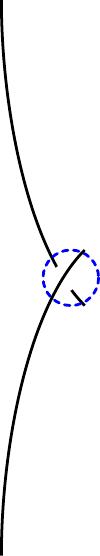}
&
\includegraphics[scale=0.85]{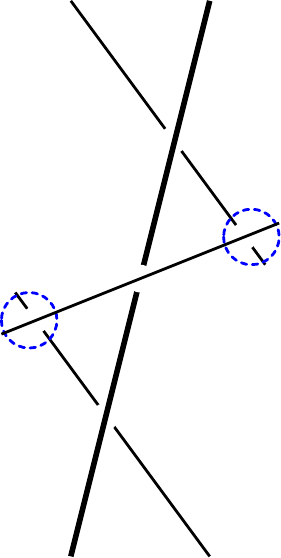}
&
\includegraphics[scale=0.85]{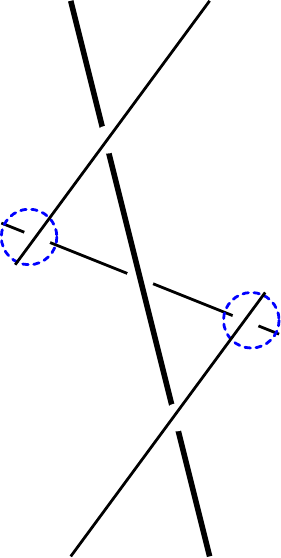}
&
\includegraphics[scale=0.85]{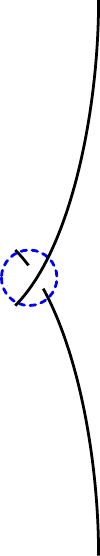}
\end{tabular}
\end{center}
    \caption{Building blocks $\alpha_I, \alpha_0, \alpha_1, \alpha_T$}
	\label{fig:buildingblocks}
    \end{figure}

\begin{figure}[ht]
\begin{center}
\setlength{\unitlength}{334.44574023pt}
\includegraphics[width=\unitlength]{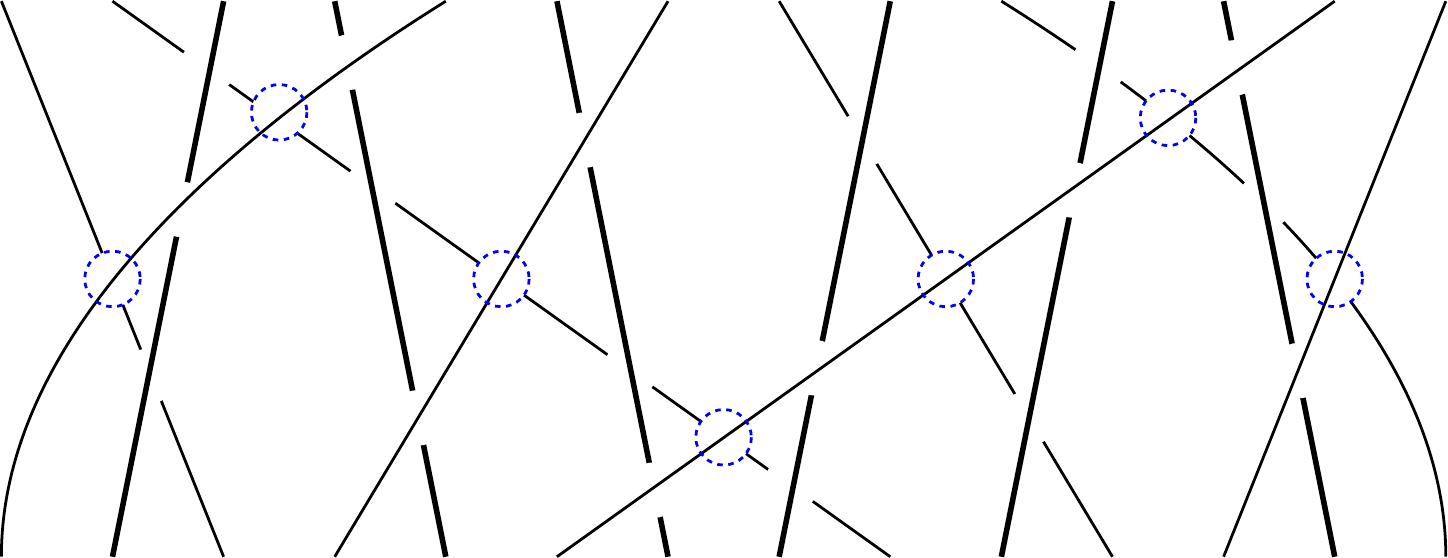}
\end{center}
\caption{The braid $\alpha(0,1,1,0,0,1)$}
\label{fig:Examplebinarybraid}
\end{figure}

\begin{lem}
\label{lem:BinaryBraid}
For $n\ge 6$, the function $\alpha:\mathcal{M}_{k,n}\to\B_n$ has the following properties:
\begin{enumerate}
\item $\alpha(A)=\alpha(A_1)\cdots\alpha(A_k)$ and $\alpha(A,b)=\alpha(A_1,b)\cdots\alpha(A_k,b)$ are left weighted as written and rigid, hence $\alpha$ is injective;
\item There is no standard reduction system for $\alpha(A)$ or $\alpha(A,b)$.
\end{enumerate}
\end{lem}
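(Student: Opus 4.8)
The plan is to reduce both parts to combinatorial statements about the permutation $\pi(\alpha(\mathbf a))$ that are visible in the picture of $\alpha(\mathbf a)$. For part (1), recall that for a permutation braid $\beta$ one has $i\in F(\beta)$ iff $\beta(i)>\beta(i+1)$ and $i\in S(\beta)$ iff $\beta^{-1}(i)>\beta^{-1}(i+1)$, that a product $\beta\gamma$ of permutation braids is left weighted as written iff $S(\gamma)\subseteq F(\beta)$, and that left weightedness of a longer product is equivalent to left weightedness of each consecutive pair. Thus $\alpha(A)=\alpha(A_1)\cdots\alpha(A_k)$ is left weighted iff $S(\alpha(A_{j+1}))\subseteq F(\alpha(A_j))$ for each $j$; and since $\alpha(A)$ is positive with first factor $\alpha(A_1)\neq\Delta$, the factorisation is its left canonical form, the infimum is $0$ and $\tau^{0}=\mathrm{id}$, so $\alpha(A)$ is rigid iff moreover $S(\alpha(A_k))\subseteq F(\alpha(A_1))$ (when $k=1$ one uses the convention for rigidity stated in the Introduction). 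The key step, to be read off the building blocks, is that $S(\alpha(\mathbf b))\subseteq F(\alpha(\mathbf a))$ whenever $b_1=0$ and $a_p=1$; in fact $S(\alpha(\mathbf a))$ depends only on $a_1$ and $F(\alpha(\mathbf a))$ only on $a_p$. Since every row of a matrix in $\mathcal M_{k,n}$ begins with $0$ (as $A^1=\mathbf 0$) and ends with $1$ (as $A^p=\mathbf 1$), all of these inclusions hold at once, giving both left weightedness and rigidity. Injectivity of $\alpha$ then follows at once: the left canonical form is unique, so $\alpha(A)=\alpha(B)$ forces $\alpha(A_j)=\alpha(B_j)$ for every $j$, and $\alpha$ is injective on rows by the discussion preceding the Lemma. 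For odd $n$ the inserted vertical strand changes $S$ and $F$ only in a controlled way near the indices $2b+1,2b+2$, and $b$ is recovered from the position of that strand, so the same argument goes through. The main work here is the explicit, if routine, computation of $S$ and $F$ through the four building blocks and the strand insertion.

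For part (2), I would first reduce to a single factor. By part (1) the displayed factorisations are the left canonical forms, so if $\alpha(A)$ (resp. $\alpha(A,b)$) had a standard reduction system $\mathcal C$, then by \cite{BNG} each canonical factor would retain a standard reduction system; in particular the permutation braid $\alpha(A_1)=\alpha(\mathbf a)$ --- which has $a_1=0$ and $a_p=1$ --- would carry some essential non-peripheral round curve to a round curve. Hence it suffices to prove that for every $\mathbf a\in\{0,1\}^p$ with $a_1=0$, $a_p=1$ (and every admissible $b$) the permutation braid $\alpha(\mathbf a)$ (resp. $\alpha(\mathbf a,b)$) sends no essential non-peripheral round curve to a round curve. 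Using that a permutation braid $\gamma$ sends the round curve around a block $M=\{a,\dots,b\}$ of consecutive punctures to a round curve exactly when $\pi(\gamma)$ carries the ordered partition $\langle\{1,\dots,a-1\},\,M,\,\{b+1,\dots,n\}\rangle$ to an ordered partition of $\{1,\dots,n\}$ into consecutive blocks, this becomes the combinatorial assertion that $\pi(\alpha(\mathbf a))$ admits no such block $M$ with $2\le|M|\le n-1$.

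I would derive this from Lemma~\ref{lem:BasicProperty}. Part (3) says $\alpha(\mathbf a)$ restricts to a single $(p+2)$-cycle $\vartheta(\mathbf a)$ on the non-transposing strands, and parts (2)--(3) describe its orbit (running monotonically through the ``$0$-columns'' and then back through the ``$1$-columns''); meanwhile $a_1=0$ and $a_p=1$ fix the behaviour of $\pi(\alpha(\mathbf a))$ near the punctures $1,2$ and $2p+1,2p+2$. Together these make it impossible for $\pi(\alpha(\mathbf a))$ to respect a non-trivial partition into consecutive blocks. Blocks $M$ meeting the non-transposing strands in at most one point must be handled separately: for such $M$, part (1) of Lemma~\ref{lem:BasicProperty} --- every strand crosses the strand ending at its own puncture --- forces $\pi(\alpha(\mathbf a))$ to fix $M$ only as a transposition of two adjacent punctures that are both transposing indices, and the partition condition read at the two sides of $M$ excludes this. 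The odd case is the same once one checks that the extra strand creates no new invariant block. The principal obstacle is exactly this last elimination: ruling out every candidate block --- including the ``reversed'' and ``swapped'' block patterns a general permutation braid may realise --- using only the cycle structure of $\vartheta(\mathbf a)$ and the boundary data.
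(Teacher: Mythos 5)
Your part (1) is essentially the paper's own argument: compute $S(\alpha(\mathbf a))=\{2,4,\dots,2p\}$ and $F(\alpha(\mathbf a))=\{1,2,4,\dots,2p,2p+1\}$ for rows with $a_1=0$, $a_p=1$, read off the inclusions giving left weightedness and rigidity, deduce injectivity from uniqueness of the left canonical form plus injectivity on single rows, and handle odd $n$ by tracking the index shift past the vertical strand. No issue there.

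For part (2) your reduction agrees with the paper's (pass to the first canonical factor $\beta_1=\alpha(A_1)$ via \cite{BNG} and show that no round curve $C=\{i,\dots,j\}$ with $1\le j-i\le n-2$ is carried to a round curve), but the elimination step has a genuine gap. You propose to derive it mainly from Lemma~\ref{lem:BasicProperty}(3), the fact that $\vartheta(\mathbf a)$ is a single $(p+2)$-cycle. That property only excludes proper invariant subsets of the non-transposing indices; it says nothing about a block $M$ being carried to a \emph{different} block $M'$, and single cycles do carry blocks to blocks (the shift $i\mapsto i+1$ is an $n$-cycle sending every block $\{a,\dots,b\}$ with $b<n$ to a block). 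In addition, the renumbering built into the definition of $\vartheta$ means its cycle does not even act on the positions you are partitioning. The paper in fact makes no use of Lemma~\ref{lem:BasicProperty}(3) here; its mechanism is a direct diameter computation: Lemma~\ref{lem:BasicProperty}(1) (every strand crosses the strand ending at its own puncture) forces, at each end of $C$, either a strict increase of $\max\beta_1(C)-\min\beta_1(C)$ or the equalities $\beta_1(i+1)=i$ and $\beta_1(j-1)=j$; the latter make $i+1$ and $j-1$ transposing indices corresponding to a $0$ and a $1$ in $A_1$, and then the non-transposing strand $j'$ at the last $0$ before that $1$ satisfies $i\le j'<j-1$ and $\beta_1(j')\ge j+2$ (or $j+1$ when $j=2p+1$), which blows up the diameter; the boundary cases $i=1$ and $j=2p+2$ are killed by $A^1=\mathbf 0$ and $A^p=\mathbf 1$. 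Your sketch contains neither the forced endpoint equalities nor the escaping strand $j'$, and your separate treatment of blocks meeting the non-transposing indices in at most one point asserts, without justification, that such a block must be fixed as a set (why could it not map to a disjoint block?). As written, the hard half of the lemma is not proved.
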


\begin{proof}
Let $p=\lfloor \frac{n-2}2 \rfloor$.

\noindent (1)
We already observed that $\alpha$ is injective on $\mathcal{M}_{1,n}$.
The starting set $S(\alpha(\mathbf{a}))=\{2,4,\cdots,2p\}$ and the finishing set $F(\alpha(\mathbf{a}))=\{1,2,4,\cdots,2p,2p+1\}$, and so $\alpha(A)=\alpha(A_1)\cdots\alpha(A_k)$ is rigid and this is the left weighted form as written for any $A\in\mathcal{M}_{k,n}$ and so $\alpha$ is injective on $\mathcal{M}_{k,n}$.

Now suppose $n$ is odd. As before, the transposing indices and the vertical strand of $\alpha(\mathbf{a},b)$ completely determine $(\mathbf{a}, b)\in \mathcal{M}_{1,n}$. And the starting and finishing sets of $\alpha(\mathbf{a},b)$ are obtained from those of $\alpha(\mathbf{a})$ by shifting indices $\ge 2b+2$ by 1. Hence $\alpha(A)$ is rigid and left weighted as written and so $\alpha$ is injective on $\mathcal{M}_{k,n}$.

\noindent (2)
By considering punctures contained inside, a standard circle $C$ in a disk with $n$ punctures can be expressed as a subset $\{i,i+1,\ldots,j\}$ of consecutive integers in $\{1,2,\ldots,n\}$ such that $1\le |C|\le n-2$ where $|C|$ denotes the diameter of $C$ defined by $|C|=j-i$.
As a self-homeomorphism of a disk with $n$ punctures, $n$-braids are to act from the right to maintain the consistency between composition and multiplication.
If $C$ is a standard reduction circle for a $n$-braid $\beta$ with the left weighted form $\beta=\beta_1\cdots\beta_k$, then $\beta_1\cdots\beta_i(C)$ is also standard for each $1\le i\le k$ (for example, see \cite{BNG, LL}). In particular, $\beta_1(C)$ is standard and $|\beta_1(C)|=|C|$.

Let $n=2p+2$. Suppose $\alpha(A)$ has a standard circle $C=\{i,i+1,\dots, j\}$ for $A\in\mathcal{M}_{k,n}$. By (1), the left weighted form $\alpha(A)=\beta_1\cdots\beta_k$ is given by $\beta_i=\alpha(A_i)$.
We show that it is impossible to have $|\beta_1(C)|=|C|$ and so $\alpha(A)$ has no standard reduction system.
If $1<i$, then either $\beta_1(i)<i$ or $\beta_1(i+1)\le i$. Similarly, if $j<2p+2$, then either  $\beta_1(j)>j$ or $\beta_1(j-1)\ge j$.
Unless both equalities hold, $|\beta_1(C)|>j-i=|C|$.
Thus $\beta_1(i+1)=i$ and $\beta_1(j-1)=j$. Hence both $i+1$ and $j-1$ are transposing indices and they correspond to 0 and 1 in the sequence $A_1$. Let $j'$ be the non-transposing index corresponding to the last $0$ before the $1$ corresponding to the $(j-1)$-st strand. Then $i\le j'<j-1$ and the $j'$-th strand crosses the $(j-1)$-st strand and $\beta_1(j')>j$. In fact, $\beta_1(j')\ge j+2$ for $j<2p+1$ and $\beta_1(j')=j+1$ for $j=2p+1$.
Hence $|\beta_1(C)|\ge \beta_1(j')-i>j-i=|C|$.

Consider now the case $i=1$. Then $3\le j<2p+2$.
Since $A^1=\mathbf{0}$, $\beta_1(3)=2$.
As above, there exists $j'$ such that $\beta_1(j')\ge j+2$ unless $j=2p+1$.
This implies that $\beta_1(C)\ge (j+2)-2\ge j-1=|C|$ unless $j=2p+1$.
The remaining possibility is that $C=\{1,\cdots, 2p+1\}$ and $\beta_1(2p+2)=1$.
However this is also impossible because $A^p=\mathbf{1}$ and $\beta_1(2p+2)=2p\ge 1$.
The case $j=2p+2$ is symmetric to the case $i=1$.

Let $n$ be odd. Suppose $\alpha(A,b)$ has a standard circle $C=\{i,i+1,\dots, j\}$ for $A\in\mathcal{M}_{k,n}$.
By forgetting the $(2b+2)$-nd strand, consider $C-\{2b+2\}$.
If $|C-\{2b+2\}|>0$, then it is a standard reduction circle for $\alpha(A)$ and so this is impossible.
If $|C-\{2b+2\}|=0$, then $C$ is either $\{2b+1, 2b+2\}$ or $\{2b+2, 2b+3\}$.
But these two cases are impossible by an easy observation on diameters.
\end{proof}

By cycling rows, we define the cycling operation $\mathcal{M}_{k,n}\to \mathcal{M}_{k,n}$ similar to the cycling operation $\mathbf{c}:\B_n\to\B_n$ of braids groups. That is,
$\mathbf{c}(A)$ is obtained from $A$ by moving the first row of $A$ to the bottom so that $\alpha(\mathbf{c}(A))=\alpha(A_2)\cdots\alpha(A_k)\alpha(A_1)$ and $\mathbf{c}(A,b)=(\mathbf{c}(A),b)$.
Note that $\mathbf{c}\alpha=\alpha\mathbf{c}$ by Lemma~\ref{lem:BinaryBraid}~(1).
Hence the order of $A$ or $(A,b)$ under cycling is well defined and denoted by $|\mathbf{c}(A)|$ or $|\mathbf{c}(A,b)|$.

Let $\tau:\B_n\to\B_n$ be the involution sending $\sigma_i$ to $\sigma_{n-i}$. Then for any $\beta\in\B_n$, $\tau(\beta)=\Delta^{-1}\beta\Delta$ where $\Delta$ is the permutation braid corresponding to the half-twist.
For any column vector $\mathbf{a}$ over $\{0,1\}$, let $\mathbf{\bar a}$ denote the column vector obtained by taking Boolean negation for each entry of $\mathbf{a}$.
Define $\tau:\mathcal{M}_{k,n}\to\mathcal{M}_{k,n}$ by
$\tau(A)=(\bar A^p\cdots\bar A^1)$ if $n$ is even and $\tau(A,b)=(\tau(A), p-b)$ if $n$ is odd. It is easy to see that $\tau\alpha=\alpha\tau$.

Given a braid $\beta$, let $I(\beta)$ denote one of finite conjugacy invariant sets $SSS(\beta)$ or $R(\beta)$. For $x\in I(\beta)$, $\rho\in \B_n^+$ is called an {\em $I$-conjugator} of $x$ if $\rho^{-1}x\rho\in I(\beta)$. If a nontrivial $I$-conjugator $\rho\in\B_n^+$ is minimal with respect to the partial order $\prec$, then $\rho$ is called a {\em minimal $I$-conjugator} of $x$. The invariant set $I(\beta)$ is generated by iterated conjugations by minimal $I$-conjugators. Given a left-weighted form $x=\Delta^kx_1\cdots x_{\ell}\in I(\beta)$ and a minimal $I$-conjugator $\rho$ of $x$, there are  two possibilities that $\rho\prec \alpha(x_1)$ or $\rho\prec \alpha(x_\ell)^{-1}\Delta$ and we call $\rho$ a {\em cut-head} or an {\em add-tail} conjugator of $x$, respectively. If $x$ is rigid, the two kinds of minimal $R$-conjugators are mutually exclusive.

For $x,y \in R(\beta)$ and a minimal $R$-conjugator $\rho$,
we use $\xymatrix{x\ar[r]^{\rho}&y}$ if $\rho^{-1}x\rho=y$.
Then $R(\beta)$ can be regarded as a directed graph.
And we use $\xymatrix{x\ar@{=>}[r]^{\rho}&y}$ if there is a directed path from $x$ to $y$ in $R(\beta)$.

\section{Cut-head conjugators for $\alpha(A)$}

For $A\in \mathcal{M}_{k,n}$, we will show that there is only one cut-head $R$-conjugator for $\alpha(A)$.

\begin{lem}\label{lem:StartingSets}
Let $\mathbf{a}=(0,a_2,\cdots,a_{p-1}, 1)\in\{0,1\}^p$ for $p\ge 2$ and $\varphi=\vartheta(\mathbf{a})^{-1}$.
Suppose $\beta\prec\alpha(\mathbf{a})$ and $\beta^{-1}\alpha(\mathbf{a})\gamma$ is a permutation braid for some permutation $(2p+2)$-braid $\gamma$.
\begin{enumerate}
\item[(i)] If $\sigma_{2j}\prec\gamma$ for $1\le j\le p$, then $\sigma_{2j}\sigma_{\varphi(2j)}\prec\beta$;
\item[(ii)] If $\sigma_1\prec\gamma$, then $\sigma_2\sigma_{\varphi(1)}\prec\beta$;
\item[(iii)] If $\sigma_{2p+1}\prec\gamma$, then $\sigma_{2p}\sigma_{\varphi(2p+1)}\prec\beta$.
\end{enumerate}
\end{lem}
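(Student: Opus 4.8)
The plan is to peel the statement down to a single fact about one prefix of $\alpha(\mathbf a)$ and then read that fact off the building-block picture. Put $\delta=\beta^{-1}\alpha(\mathbf a)$, which is a permutation braid because $\beta\prec\alpha(\mathbf a)$, and recall $\delta\gamma$ is a permutation braid by hypothesis. In each of (i)--(iii) we are given $\sigma_c\prec\gamma$ for a specific $c$ (namely $c=2j$, $c=1$, $c=2p+1$); then $\delta\sigma_c$ is a prefix of $\delta\gamma$, hence itself a permutation braid, so $c\notin F(\delta)$. Since the finishing set $F(\alpha(\mathbf a))=\{1,2,4,\dots,2p,2p+1\}$ contains $c$, the two strands of $\alpha(\mathbf a)$ ending at punctures $c$ and $c+1$ cross exactly once in $\alpha(\mathbf a)=\beta\delta$, and as this crossing is not in $\delta$ it lies in $\beta$. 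So it suffices to prove: if $\beta\prec\alpha(\mathbf a)$ and the two strands ending at punctures $c$ and $c+1$ cross inside $\beta$, then $\sigma_{c'}\sigma_{\varphi(c)}\prec\beta$, where $c'=c$ when $c$ is even, $c'=2$ when $c=1$, and $c'=2p$ when $c=2p+1$; note that in every case $c'\in\{2,4,\dots,2p\}=S(\alpha(\mathbf a))$.

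Next I would reformulate this in terms of crossings. Drawing $\alpha(\mathbf a)$ with all strands straight as in Figure~\ref{fig:Examplebinarybraid}, a prefix $\beta\prec\alpha(\mathbf a)$ is the same as a set of crossings of $\alpha(\mathbf a)$ that is closed downward for the order ``$\chi'$ lies above $\chi$ on a common strand $\Rightarrow\chi'$ precedes $\chi$'': if $\chi$ is a crossing of $\beta$ and $\chi'$ lies higher on one of the two strands through $\chi$, then $\chi'$ is a crossing of $\beta$ as well. Let $\chi_0$ be the crossing of the two strands ending at $c,c+1$ (so $\chi_0\in\beta$ by the reduction), and let $s,s'$ be those two strands. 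Because $c'\in S(\alpha(\mathbf a))$, the crossing $\chi_1$ of the strands starting at punctures $c'$ and $c'+1$ is a minimal element of the crossing order, so $\sigma_{c'}\prec\beta$ is equivalent to $\chi_1\in\beta$; and once $\chi_1$ is removed there is a well-defined crossing $\chi_2$ of $\alpha(\mathbf a)$ between the two strands occupying punctures $\varphi(c),\varphi(c)+1$ in $\sigma_{c'}^{-1}\beta$, and $\sigma_{c'}\sigma_{\varphi(c)}\prec\beta$ is equivalent to $\{\chi_1,\chi_2\}\subset\beta$. Thus it remains to name $\chi_1$ and $\chi_2$ explicitly (in particular to check that the two strands defining $\chi_2$ really do cross) and to show that each lies weakly below $\chi_0$ in the crossing order, so that $\chi_0\in\beta$ drags them into $\beta$.

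The heart of the argument is this last verification, done on the glued building-block diagram using Lemma~\ref{lem:BasicProperty}. The strands $s$ and $s'$ ending at $c,c+1$ are read off from $\vartheta(\mathbf a)$ and $T(\mathbf a)$; by Lemma~\ref{lem:BasicProperty}(1) the strand ending at a puncture is crossed by the strand starting there, which identifies $\chi_1$ as $s$ or $s'$ crossed with a strand starting at $c'$ or $c'+1$, so $\chi_1$ shares a strand with $\chi_0$; and $\varphi=\vartheta(\mathbf a)^{-1}$ is precisely what places $\chi_2$ at punctures $\varphi(c),\varphi(c)+1$ after $\chi_1$ is deleted, again sharing a strand with $\chi_0$. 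Tracing $s$ and $s'$ through the blocks $\alpha_I,\alpha_0,\alpha_1,\alpha_T$ shows that $\chi_1$ and $\chi_2$ occur near the tops of those strands, above $\chi_0$. This is a finite case check: the interior cases of (i) split into $a_{c/2}=1$ (puncture $c$ is a transposing index, so $s'$ is a straight transposing strand out of $c$) and $a_{c/2}=0$ (puncture $c+1$ is a transposing index, so $s$ is a straight transposing strand out of $c+1$), together with the boundary cases (ii) and (iii). I expect the boundary behaviour to be the main obstacle: for (ii), (iii) and for the sub-cases $c\in\{1,2\}$ and $c\in\{2p,2p+1\}$ of (i) the local picture is that of the end block $\alpha_I$ or $\alpha_T$ rather than of a generic interior block, and there the identification of $s,s'$, of $\chi_1,\chi_2$, and especially the claim that $\chi_2$ lands exactly at punctures $\varphi(c),\varphi(c)+1$, must each be checked by hand; this is also where the hypotheses $a_1=0$ and $a_p=1$ enter.
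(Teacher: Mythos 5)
Your opening reduction is exactly the paper's: from $\sigma_c\prec\gamma$ and the hypothesis that $\delta\gamma$ is a permutation braid, where $\delta=\beta^{-1}\alpha(\mathbf a)$, you get $c\notin F(\delta)$, and since $c\in F(\alpha(\mathbf a))$ the two strands $u,v$ ending at punctures $c$ and $c+1$ must cross inside $\beta$. The gap lies in the principle you then use to propagate this one crossing to $\sigma_{c'}$ and $\sigma_{\varphi(c)}$. It is not true that a prefix of a permutation braid corresponds to a set of crossings that is downward closed for the ``lies above on a common strand'' order of a fixed drawing: already for $\alpha=\sigma_1\sigma_2\sigma_1$ the prefix $\sigma_2$ contributes only the last crossing of the drawn word, while both crossings lying above it on its two strands are absent from the prefix. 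The obstruction is a triple of pairwise-crossing strands, along which the order of crossings on a strand is not canonical (it changes under the braid relation), and such triples do occur in the braids at hand: for $\mathbf a=(0,1)$ the permutation of $\alpha(\mathbf a)$ is $(3,6,2,5,1,4)$ and strands $1,3,5$ pairwise cross. So ``$\chi_1$ lies above $\chi_0$ in the drawing of Figure~\ref{fig:Examplebinarybraid}'' does not by itself imply $\chi_1\in\beta$, and the step on which your whole case check rests is an unproved, and in general false, claim.

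The paper uses a different, drawing-independent criterion tailored to this configuration: if a strand $s$ crosses exactly one of $u,v$ in $\alpha(\mathbf a)$, then that crossing must already lie in $\beta$. Indeed, if it lay in $\delta$, then because $u,v$ end at the adjacent positions $c,c+1$ without crossing in $\delta$, the strand $s$ would have to sit strictly between $u$ and $v$ at the interface between $\beta$ and $\delta$; since $s$ crosses neither $u$ nor $v$ within $\beta$, it would then also lie strictly between them at the top of the braid, which is impossible because $u$ and $v$ start in the order opposite to the one in which they end. The crossings $\sigma_{c'}$ and $\sigma_{\varphi(c)}$ are then exhibited as crossings of exactly this kind (one of $u,v$ is the transposing strand of the relevant block and the other starts at $\varphi(c)$ or $\varphi(c)+1$), which is essentially the finite verification you postpone to the end. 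If you replace your downward-closure principle by this ``crosses exactly one of the two'' criterion, your outline becomes the paper's proof; as written, the key implication is unjustified.
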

\begin{proof}
Consider two strands that end at $2j$-th and $(2j+1)$-st punctures.
One of them corresponds to a transposing index, and the other corresponds to $\varphi(2j)$. If $\sigma_{2j}\prec\gamma$, then $2j\not\in F(\beta^{-1}\alpha(\mathbf{a}))$ since $\beta^{-1}\alpha(\mathbf{a})\gamma$ is a permutation braid. Thus any crossing formed by a strand $s$ and one of the above two strands must be cancelled out by $\beta^{-1}$ if the strand $s$ crosses only one of the two.
In particular, $\sigma_{2j}$ and $\sigma_{\varphi(2j)}$ are such crossings for $2\le j\le p-1$.
Figure~\ref{fig:CutHead} shows $\alpha(\mathbf{a})\sigma_{2j}$ and the shaded region represents $\beta\prec\alpha(\mathbf{a})$ and two crossings surrounded by dotted circles are $\sigma_{2j}$ and $\sigma_{\varphi(2j)}$.
For $j=1$, $\varphi(2)=1$ and it is easy to see that $\sigma_2\sigma_1\prec\beta$ to have $2\not\in F(\beta^{-1}\alpha(\mathbf{a}))$. Similarly we have $\sigma_{2p}\sigma_{2p+1}\prec\beta$ if $2p\not\in F(\beta^{-1}\alpha(\mathbf{a}))$. Thus we establishes (i).
The cases (ii) and (iii) are similar to the case (i) with $2\le j\le p-1$.
\end{proof}

\begin{figure}[ht]
\begin{center}
\begingroup
    \setlength{\unitlength}{204pt}
  \begin{picture}(1,0.73637727)%
    \put(0,0){\includegraphics[width=\unitlength]{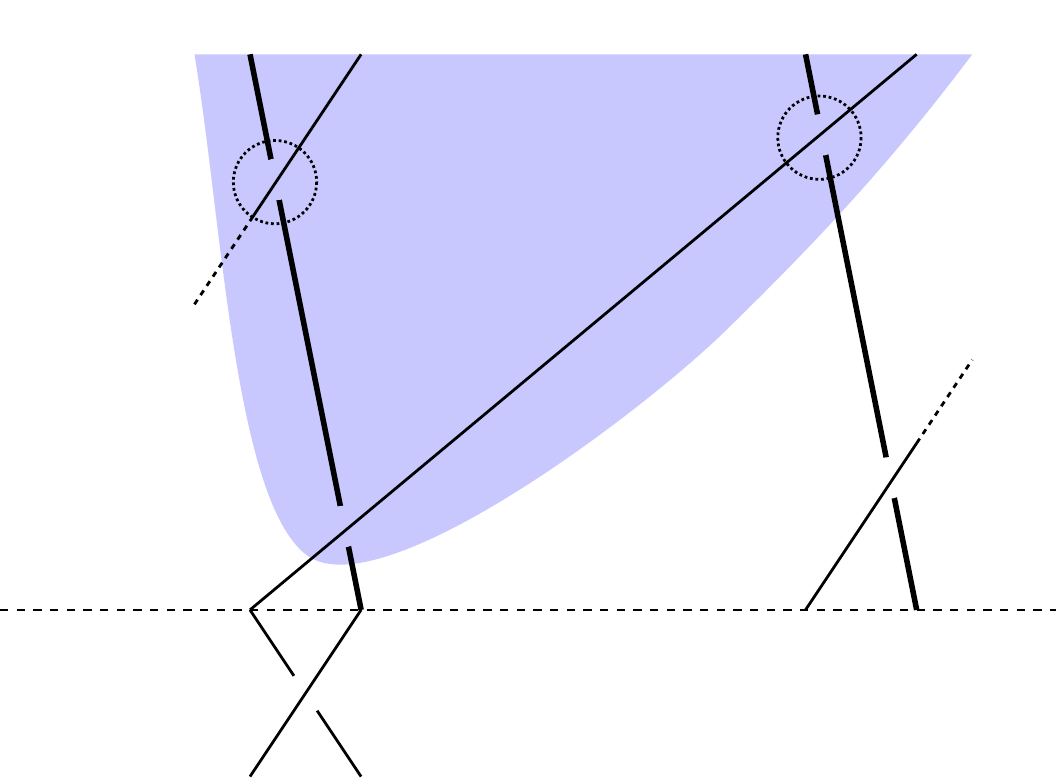}}%
    \put(0.18421053,0.16798485){\color[rgb]{0,0,0}\makebox(0,0)[lb]{\smash{$2j$}}}%
    \put(0.38345867,0.16798485){\color[rgb]{0,0,0}\makebox(0,0)[lb]{\smash{$2j+1$}}}%
    \put(0.72368421,0.69806005){\color[rgb]{0,0,0}\makebox(0,0)[lb]{\smash{$\varphi(2j)$}}}%
    \put(0.06954887,0.36347359){\color[rgb]{0,0,0}\makebox(0,0)[lb]{\smash{$\alpha(\mathbf{a})$}}}%
    \put(0.07706767,0.04016531){\color[rgb]{0,0,0}\makebox(0,0)[lb]{\smash{$\sigma_{2j}$}}}%
    \put(0.41729323,0.48941341){\color[rgb]{0,0,0}\makebox(0,0)[lb]{\smash{$\beta$}}}%
    \put(0.22021053,0.69806005){\color[rgb]{0,0,0}\makebox(0,0)[lb]{\smash{$2j$}}}%
  \end{picture}%
\endgroup
\end{center}
\caption{An intuitive figure for the proof of Lemma~\ref{lem:StartingSets}}
\label{fig:CutHead}
\end{figure}

\begin{lem}
\label{lem:CutheadMinimalConjugator}
For $A\in \mathcal{M}_{k,n}$,
there is only one cut-head minimal $R$-conjugator $\alpha(A_1)$ of $\alpha(A)$.
\end{lem}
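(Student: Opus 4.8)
The plan is to show (i) that $\alpha(A_1)$ itself satisfies $\alpha(A_1)^{-1}\alpha(A)\alpha(A_1)\in R(\alpha(A))$, and (ii) that no proper nontrivial prefix $\rho$ of $\alpha(A_1)$ satisfies $\rho^{-1}\alpha(A)\rho\in R(\alpha(A))$; since a cut-head $R$-conjugator is by definition a nontrivial prefix of the first canonical factor $\alpha(A_1)$, (i) and (ii) together say that $\alpha(A_1)$ is a minimal cut-head $R$-conjugator of $\alpha(A)$ and the only one. Claim (i) is quick: $\alpha(A)$ has infimum $0$ (its first factor $\alpha(A_1)$ is not $\Delta$, since $S(\alpha(A_1))=\{2,4,\dots,2p\}$), so cycling is conjugation by $\alpha(A_1)$ and $\alpha(A_1)^{-1}\alpha(A)\alpha(A_1)=\alpha(A_2)\cdots\alpha(A_k)\alpha(A_1)=\alpha(\mathbf c(A))$; cycling the rows of $A$ preserves the column conditions $A^1=\mathbf 0$, $A^p=\mathbf 1$, so $\mathbf c(A)\in\mathcal M_{k,n}$ and $\alpha(\mathbf c(A))$ is rigid by Lemma~\ref{lem:BinaryBraid}(1), hence lies in $R(\alpha(A))$.

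For (ii) I argue by contradiction. Suppose $\rho$ is a proper nontrivial prefix of $\alpha(A_1)$ with $y:=\rho^{-1}\alpha(A)\rho\in R(\alpha(A))$; write $\alpha(A_1)=\rho\delta$ with $\delta\in\mathbf S_n$ nontrivial, so $y=\delta\,\alpha(A_2)\cdots\alpha(A_k)\,\rho$. Since $y$ is rigid and conjugate to $\alpha(A)$ it lies in $SSS(\alpha(A))$, so $\inf y=0$ and the canonical length of $y$ is $k$. But $S(\rho)\subset S(\alpha(A_1))=\{2,4,\dots,2p\}\subset F(\alpha(A_k))$, so $\alpha(A)\rho=\alpha(A_1)\cdots\alpha(A_k)\rho$ is left weighted exactly as written, with canonical length $k+1$, and $y$ is obtained from it by left-dividing by the prefix $\rho\prec\alpha(A_1)$ of its first factor. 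So this left-division must drop the canonical length by exactly one; in particular $\delta\alpha(A_2)$ is not left weighted, and the simple pieces of $\alpha(A_2)$, then (cascading leftward) of $\alpha(A_3),\dots$, must be pulled into $\delta$ just far enough to collapse one level and no more. I would then follow the left normal form $y=y_1\cdots y_k$ seam by seam, with the appended $\rho$ entering at the last seam, and show the rigidity condition $F(y_1)\supset S(y_k)$ cannot hold unless $\delta$ is trivial, i.e. $\rho=\alpha(A_1)$.

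The engine of this analysis is Lemma~\ref{lem:StartingSets}, which applies to every row of $A$ because $A^1=\mathbf 0$ and $A^p=\mathbf 1$ force each $A_i$ to be of the form $(0,a_2,\dots,a_{p-1},1)$. Used with $\mathbf a=A_i$ — the lemma's $\beta$ being the portion of $\alpha(A_i)$ that ends up inside $y_{i-1}$ and the lemma's $\gamma$ the portion of $\alpha(A_{i+1})$ pulled into $y_i$ — it says that each generator $\sigma_j$ occurring in the pulled portion forces the product $\sigma_j\,\sigma_{\vartheta(A_i)^{-1}(j)}$ into $\beta$; and since each $\vartheta(A_i)$ is a single $(p+2)$-cycle (Lemma~\ref{lem:BasicProperty}(3)), chaining this along the cycle drags more and more crossings into each absorbed piece. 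Pushing this around the cyclic word, together with $F(y_1)\supset S(y_k)$, forces $\rho$ to contain every crossing of $\alpha(A_1)$, contradicting $\delta\neq 1$. (Applying $\Rev$, which interchanges cut-head with add-tail conjugators and the first with the last canonical factor, suggests an equivalent route that instead works inward from the last row $A_k$.)

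\textbf{The main obstacle} is handling the two constraints on $y$ simultaneously: that left-dividing the length-$(k+1)$ word $\alpha(A)\rho$ by $\rho$ lowers the canonical length to exactly $k$ (so the chain of absorptions collapses exactly one level), and that the resulting normal form is rigid. Either one alone admits many candidate $\rho$; only their interaction, unwound through the $(p+2)$-cycles $\vartheta(A_i)$ via Lemma~\ref{lem:StartingSets}, forces $\rho=\alpha(A_1)$. The fiddly part is keeping track of how the starting and finishing sets of the intermediate factors $y_i$ shift as simple pieces migrate leftward — which is exactly what the explicit building-block picture of the $\alpha(A_i)$, encoded in Lemma~\ref{lem:StartingSets}, is designed to control.
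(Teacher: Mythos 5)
Your proposal follows essentially the same route as the paper's proof: the paper likewise observes that $\alpha(A_1)$ realizes cycling (hence is an $R$-conjugator), writes $\rho^{-1}\alpha(A)\rho=(\rho^{-1}\alpha(A_1)\rho_1)\cdots(\rho_{k-1}^{-1}\alpha(A_k)\rho)$ with each block a permutation braid, and then iterates Lemma~\ref{lem:StartingSets} around the rows — using exactly the two ingredients you name, that each $\vartheta(A_i)$ is a $(p+2)$-cycle and that the same $\rho$ feeds back after the last row — to force $(\sigma_2\sigma_1)\sigma_4\cdots\sigma_{2p-2}(\sigma_{2p}\sigma_{2p+1})$ to be a prefix of $\rho$ and of every $\rho_i$, whence $F(\rho^{-1}\alpha(A_1))=\emptyset$ and $\rho=\alpha(A_1)$. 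The only piece you omit is the odd case $n=2p+3$, where elements of $\mathcal{M}_{k,n}$ are pairs $(A,b)$; the paper handles it by deleting the vertical $(2b+2)$-nd strand from a putative cut-head conjugator to reduce to the even case and then reinserting it.
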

\begin{proof}
Let $\rho$ be an arbitrary cut-head $SSS$-conjugator for $\alpha(A)$. Suppose $n=2p+2$.
Since $S(\alpha(A_1))=\{2,4,\cdots,2p\}$ and $\rho$ is nontrivial, $2j\in S(\rho)$ for some $1\le j\le p$.
Also there exists $\rho_i$'s that make
$$
\rho^{-1}\alpha(A)\rho=(\rho^{-1}\alpha(A_1)\rho_1)\cdots(\rho_{k-1}^{-1}\alpha(A_k)\rho)
$$
left-weighted as written. Each block in this decomposition satisfies the hypothesis of Lemma~\ref{lem:StartingSets}. Since $\rho$ feeds back at the end and $\varphi$ is a cycle on the set $\{1,2,\ldots,2p,2p+1\}$ by Lemma~\ref{lem:BasicProperty}, iterated application of Lemma~\ref{lem:StartingSets} shows  $$(\sigma_2\sigma_1)\sigma_4\cdots\sigma_{2p-2}(\sigma_{2p}\sigma_{2p+1})$$ is a prefix of $\rho$ and $\rho_i$ for all $i$. We note that $\sigma_2\sigma_1$, $\sigma_4,\ldots,\sigma_{2p-2}$, $\sigma_{2p}\sigma_{2p+1}$ are mutually commutative and so they are not interfered each other. In particular, $(\sigma_2\sigma_1)\sigma_4\cdots\sigma_{2p-2}(\sigma_{2p}\sigma_{2p+1})\prec\rho_1$ implies $F(\rho^{-1}\alpha(A_1))\cap\{1,2,\ldots,2p,2p+1\}=\emptyset$. Since $F(\rho^{-1}\alpha(A_1))\subset\{1,2,\ldots,2p,2p+1\}$, we must have $F(\rho^{-1}\alpha(A_1))=\emptyset$, that is, $\rho=\alpha(A_1)$. Since $\alpha(A_1)$ is clearly an $R$-conjugator of $\alpha(A)$, it is a unique cut-head $R$-conjugator of $\alpha(A)$.

Now suppose $n=2p+3$. Let $(A,b)\in\mathcal{M}_{k,n}$, $\rho$ be an arbitrary cut-head $SSS$-conjugator for $\alpha(A,b)$, and $\rho'$ be obtained from $\rho$ by removing $(2b+2)$-nd strand. Since the starting and finishing sets of $\alpha(A_i,b)$ are obtained from those of $\alpha(A_i)$ by shifting indices $\ge 2b+2$ by 1 for all $i$, $\rho'$ is a cut-head $SSS$-conjugator of $\alpha(A)$. Thus $\rho'=\alpha(A_1)$ and so
$\rho=\alpha(A_1,b)$. Then $\alpha(A_1,b)$ is clearly an $R$-conjugator of $\alpha(A,b)$ and so it is a unique cut-head $R$-conjugator of $\alpha(A,b)$.
\end{proof}

\section{Add-tail conjugators for $\alpha(A)$}
Suppose $n$ is even.
For $A\in\mathcal{M}_{k,n}$, let $\rho$ be an arbitrary add-tail minimal $R$-conjugator for $\alpha(A)$.
Then $F(\alpha(A_k))=\{1,2,4,\cdots,2p,2p+1\}$ and so $S(\rho)\subset\{3,5,\cdots,2p-1\}$. Hence we only consider add-tail conjugator starting with $\sigma_{2i+1}$ for $1\le i\le p-1$.
\begin{lem}
\label{lem:Join}
Let $\mathbf{a}\in\{0,1\}^p$ for $p\ge 2$, and $\vartheta=\vartheta(\mathbf{a})$.
Suppose $\alpha(\mathbf a)\gamma$ is a permutation braid for some permutation $(2p+2)$-braid $\gamma$
and  $\sigma_{2i+1}\prec\alpha(\mathbf a)\gamma$ for $1\le i\le p-1$.
\begin{enumerate}
\item[(i)] If $a_i=0$ and $a_{i+1}=1$, then $\sigma_{2i+1}\sigma_{2i}\sigma_{2i+2}\sigma_{2i+1}\prec\gamma$;
\item[(ii)] If $a_{i+1}=0$, then
$\sigma_{2i+1}(\sigma_{2i+3}\sigma_{2i+4}\cdots\sigma_{\vartheta(2i+2)})\prec\gamma$; 
\item[(iii)] If $a_{i+1}=0$ and $\sigma_{2i+3}\sigma_{2i+4}\cdots\sigma_{2j-1}\prec\alpha(\mathbf a)\gamma$ for the smallest $j>i+1$ such that $a_j=0$, then $\sigma_{2j+1}\prec\gamma$;
\item[(iv)] If $a_i=1$, then $\sigma_{2i+1}(\sigma_{2i-1}\sigma_{2i-2}\cdots\sigma_{\vartheta(2i)})\prec\gamma$; 
\item[(v)] If $a_i=1$ and $\sigma_{2i-1}\sigma_{2i-2}\cdots\sigma_{2j+1}\prec\alpha(\mathbf a)\gamma$ for the largest $j<i$ such that $a_j=1$, then $\sigma_{2j-1}\prec\gamma$.
\end{enumerate}
\end{lem}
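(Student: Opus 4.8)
The plan is to run a strand count in the building-block picture of $\alpha(\mathbf a)$, separately through each of the five configurations of $(a_i,a_{i+1})$, in the same spirit as the proof of Lemma~\ref{lem:StartingSets}, using two ambient facts. First, since $\alpha(\mathbf a)\gamma$ is a permutation braid and $\gamma$ is positive, $F(\alpha(\mathbf a))\cap S(\gamma)=\emptyset$; as $F(\alpha(\mathbf a))=\{1,2,4,\dots,2p,2p+1\}$ this forces $S(\gamma)\subseteq\{3,5,\dots,2p-1\}$, so $\gamma$ cannot start with an even generator nor with $\sigma_1$ or $\sigma_{2p+1}$, and the same holds --- with the evident shift --- for every braid obtained from $\gamma$ by deleting a prefix. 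Second, for a permutation braid $\delta$ one has $m\in S(\delta)$ exactly when the strands in the top positions $m,m+1$ of $\delta$ cross inside $\delta$; hence if $\delta$ inverts a pair of its top positions $x<y$, then $S(\delta)$ must meet $\{x,x+1,\dots,y-1\}$. Since the strands in the top positions $2i+1$ and $2i+2$ of $\alpha(\mathbf a)$ do not cross inside $\alpha(\mathbf a)$ --- by inspection of the picture, $2i+1\notin S(\alpha(\mathbf a))$ --- the hypothesis $\sigma_{2i+1}\prec\alpha(\mathbf a)\gamma$ says precisely that these two strands cross inside $\gamma$.

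The first real step is then to locate where these two strands enter $\gamma$, i.e.\ their bottom positions in $\alpha(\mathbf a)$, together with the transposing ``passenger'' strands the ``cross at most once'' rule makes them drag along; this is read off from Lemma~\ref{lem:BasicProperty} and the membership rule for $T(\mathbf a)$. For instance, in case (i), where $a_i=0$ and $a_{i+1}=1$ make both $2i+1$ and $2i+2$ transposing, one checks $\alpha(\mathbf a)(2i+1)=2i$ and $\alpha(\mathbf a)(2i+2)=2i+3$: the possibility $\alpha(\mathbf a)(2i+1)=2i+2$ is ruled out by Lemma~\ref{lem:BasicProperty}(1), and $\alpha(\mathbf a)(2i+2)=2i+1$ would force $\gamma$ to invert the adjacent pair $\{2i,2i+1\}$, which is impossible since $2i\notin S(\gamma)$. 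In cases (ii) and (iv) the non-transposing one of the two strands is routed, by Lemma~\ref{lem:BasicProperty}(2) and its mirror on the $1$-chain, to the slot $\vartheta(2i+2)$, resp.\ $\vartheta(2i)$. Once the entry positions are fixed, each case becomes a short forced walk down $\gamma$, obtained by reapplying the second ambient fact after peeling off each admissible generator. In case (i) the strands enter at $2i$ and $2i+3$, the only odd index strictly between them is $2i+1$, and checking the permutation inequalities that result shows $\gamma$ must begin with the four-crossing block $\sigma_{2i+1}\sigma_{2i}\sigma_{2i+2}\sigma_{2i+1}$ (the figure is the analogue of Figure~\ref{fig:CutHead}). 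In cases (ii) and (iv) the forced first letter $\sigma_{2i+1}$ opens a monotone staircase that the $S$-constraint, reapplied at every step, forces to continue $\sigma_{2i+3}\sigma_{2i+4}\cdots\sigma_{\vartheta(2i+2)}$, resp.\ $\sigma_{2i-1}\sigma_{2i-2}\cdots\sigma_{\vartheta(2i)}$, until the non-transposing strand reaches its target slot. In cases (iii) and (v) the extra hypothesis that $\alpha(\mathbf a)\gamma$ already carries the longer prefix $\sigma_{2i+3}\cdots\sigma_{2j-1}$, resp.\ its mirror, says the staircase has deposited a further strand into the slot where the next $0$, resp.\ previous $1$, lives; rerunning the case-(ii), resp.\ case-(iv), analysis one index onward then gives $\sigma_{2j+1}\prec\gamma$, resp.\ $\sigma_{2j-1}\prec\gamma$, which is exactly the step that will later permit marching once around the $\vartheta$-cycle. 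Upgrading ``a crossing occurs inside $\gamma$'' to ``the corresponding word left-divides $\gamma$'' is handled as in the proof of Lemma~\ref{lem:CutheadMinimalConjugator}: the generators produced are non-interfering --- pairwise commuting ($\sigma_{2i+1}$ with $\sigma_{2i-1}$ or $\sigma_{2i+3}$, and $\sigma_{2i}$ with $\sigma_{2i+2}$), or supported on disjoint blocks of columns --- so the data gathered strand by strand collects into a single prefix.

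I expect the main obstacle to be the strand bookkeeping in the first step of cases (ii) and (iv): checking that the non-transposing strand lands at \emph{exactly} the slot $\vartheta(2i+2)$, resp.\ $\vartheta(2i)$ --- neither one step short nor one step too long --- and that no transposing passenger swept up by the staircase gets pushed across a strand it has already crossed inside $\alpha(\mathbf a)$, which would contradict the permutation-braid hypothesis on $\alpha(\mathbf a)\gamma$. Keeping track of which pairs of strands have been used up as one walks down $\gamma$ is the delicate bit. The case $n=2p+3$ is not needed here, since the section assumes $n$ even; it would reduce to the statement above by deleting the vertical strand, exactly as in Lemmas~\ref{lem:BinaryBraid} and~\ref{lem:CutheadMinimalConjugator}.
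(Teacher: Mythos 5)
Your proposal follows the paper's own argument: the paper proves each case by exactly this forced-crossing bookkeeping (the hypothesis $\sigma_{2i+1}\prec\alpha(\mathbf a)\gamma$ forces the two strands at top positions $2i+1,2i+2$ to cross inside $\gamma$, and the ``cross at most once'' rule propagates this crossing into the stated prefix of $\gamma$, read off from Figure~\ref{fig:AddTail}), with (iv) and (v) deduced from (ii) and (iii) by the involution $\tau$, i.e.\ your ``mirror''. The strand locations you compute (e.g.\ $\alpha(\mathbf a)(2i+1)=2i$ when $a_i=0$ and $\alpha(\mathbf a)(2i+2)=\vartheta(2i+2)$ when $a_{i+1}=0$) agree with the paper's, so this is essentially the same proof with the figures spelled out in words.
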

\begin{proof}
\noindent (i) This follows from Figure~\ref{fig:AddTail}(a). The crossing $a=\sigma_{2i+1}^{-1}$ in the picture must be canceled by multiplying $\gamma$ and so $\gamma$ has the crossing $A$. Since strands of $\alpha(\mathbf a)\gamma$ can cross each other at most once, crossings $b$ and $c$ force crossings $B$ and $C$, respectively.

Recall the permutation $\vartheta=\vartheta(\mathbf{a})$ of non-transposing indices of $\mathbf a$. If $a_{i+1}=0$, $\vartheta(2i+2)=2j$ for the smallest $j>i+1$ such that $a_j=0$ if such $j$ exists or $\vartheta(2i+2)=2p+1$ otherwise by Lemma~\ref{lem:BasicProperty}.

(ii) In Figure~\ref{fig:AddTail}(b), the shaded band represents a bundle of strands. The crossing $a=\sigma_{2i+1}^{-1}$ must be canceled by multiplying $\gamma$ and so $\gamma$ has the crossing $A$.  Since $\alpha(\mathbf a)\gamma$ is a permutation braid, the crossings $b$, $c$, $d$, and $e$ in $\alpha(\mathbf a)$ force the corresponding crossings $B$, $C$, $D$, and $E$ in $\gamma$, respectively. In particular, $ECB\prec\gamma$ where $C=\sigma_{2i+3}\dots\sigma_{\vartheta(2i+2)-1}$, $B=\sigma_{\vartheta(2i+2)}$, and $E=\sigma_{2i+1}$.

(iii) In Figure~\ref{fig:AddTail}(c), the crossings $(\sigma_{2i+3}\sigma_{2i+4}\cdots\sigma_{2j})^{-1}$ must be canceled by $\gamma$. In particular. $\gamma$ must have the crossing $A$ canceling $a$. Since the crossing $b$ forces the crossing $B$ and $\vartheta(2i+2)=2j$, $\sigma_{2j+1}\dots\sigma_{\vartheta(2j)}\prec\gamma$ and so $\sigma_{2j+1}\prec\gamma$.

(iv) and (v) are equivalent to (ii) and (iii) via the involution $\tau$.
\end{proof}

\begin{figure}[ht]
\begin{center}
\subfigure[{$[\cdots 01\cdots]$}]{
    \setlength{\unitlength}{80.8pt}
  \begin{picture}(1,2.32673267)%
    \put(0,0){\includegraphics[width=\unitlength]{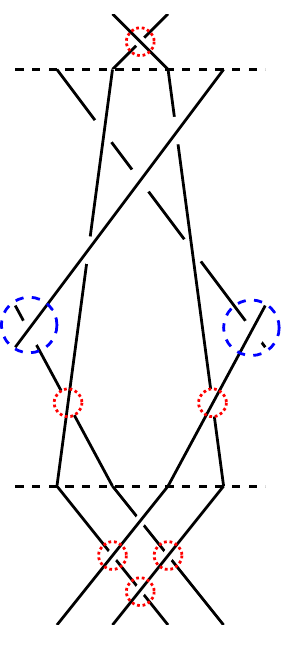}}%
    \put(0.56,2.15){\color[rgb]{0,0,0}\makebox(0,0)[lb]{\smash{$a$}}}%
    \put(0.10,0.86){\color[rgb]{0,0,0}\makebox(0,0)[lb]{\smash{$b$}}}%
    \put(0.83,0.86){\color[rgb]{0,0,0}\makebox(0,0)[lb]{\smash{$c$}}}%
    \put(0.56,0.16){\color[rgb]{0,0,0}\makebox(0,0)[lb]{\smash{$A$}}}%
    \put(0.66,0.31){\color[rgb]{0,0,0}\makebox(0,0)[lb]{\smash{$B$}}}%
    \put(0.22,0.31){\color[rgb]{0,0,0}\makebox(0,0)[lb]{\smash{$C$}}}%
  \end{picture}
}
\subfigure[{$[\cdots *0\cdots]$}]{
    \setlength{\unitlength}{112pt}
  \begin{picture}(1,1.67857143)%
    \put(0,0){\includegraphics[width=\unitlength]{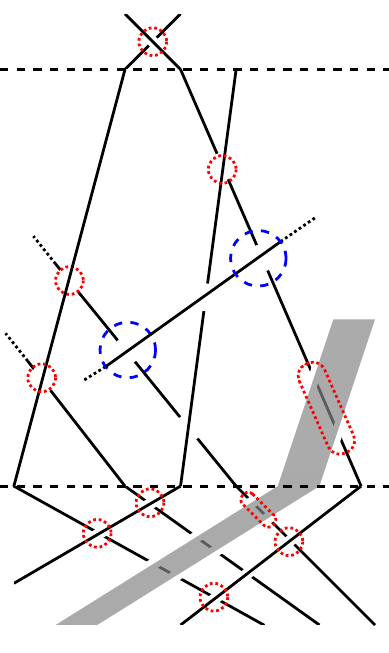}}%
    \put(0.44,1.55){\color[rgb]{0,0,0}\makebox(0,0)[lb]{\smash{$a$}}}%
    \put(0.24,0.92){\color[rgb]{0,0,0}\makebox(0,0)[lb]{\smash{$b$}}}%
    \put(0.70,0.64){\color[rgb]{0,0,0}\makebox(0,0)[lb]{\smash{$c$}}}%
    \put(0.63,1.20){\color[rgb]{0,0,0}\makebox(0,0)[lb]{\smash{$d$}}}%
    \put(0.16,0.68){\color[rgb]{0,0,0}\makebox(0,0)[lb]{\smash{$e$}}}%
   \put(0.4,0.10){\color[rgb]{0,0,0}\makebox(0,0)[lb]{\smash{$A$}}}%
    \put(0.81,0.25){\color[rgb]{0,0,0}\makebox(0,0)[lb]{\smash{$B$}}}%
    \put(0.51,0.34){\color[rgb]{0,0,0}\makebox(0,0)[lb]{\smash{$C$}}}%
    \put(0.08,0.26){\color[rgb]{0,0,0}\makebox(0,0)[lb]{\smash{$D$}}}%
    \put(0.33,0.46){\color[rgb]{0,0,0}\makebox(0,0)[lb]{\smash{$E$}}}%
  \end{picture}
}
\subfigure[{$[\cdots01\cdots10\cdots]$}]{
  \setlength{\unitlength}{204.4pt}
  \begin{picture}(1,0.91976517)%
    \put(0,0){\includegraphics[width=\unitlength]{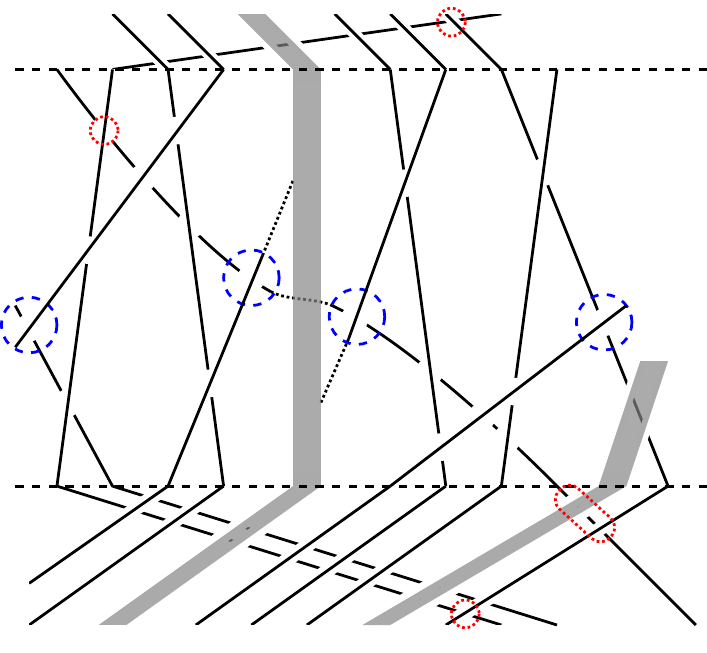}}%
    \put(0.63,0.84){\color[rgb]{0,0,0}\makebox(0,0)[lb]{\smash{$a$}}}%
    \put(0.09,0.72){\color[rgb]{0,0,0}\makebox(0,0)[lb]{\smash{$b$}}}%
    \put(0.59,0.03){\color[rgb]{0,0,0}\makebox(0,0)[lb]{\smash{$A$}}}%
    \put(0.89,0.16){\color[rgb]{0,0,0}\makebox(0,0)[lb]{\smash{$B$}}}%
    \put(0.00,0.84){\color[rgb]{0,0,0}\makebox(0,0)[lb]{\smash{$2i+2$}}}%
    \put(0.78,0.25){\color[rgb]{0,0,0}\makebox(0,0)[lb]{\smash{$2j$}}}%
  \end{picture}
}
\end{center}
\caption{Forced accumulation by $\sigma_{2i+1}$}
\label{fig:AddTail}
\end{figure}

In addition we need to consider the following possibilities for odd braids.

\begin{lem}\label{lem:JoinOdd}
Let $\mathbf{a}, \vartheta$ be as before and $1\le b\le p-1$ be an integer. Suppose $\alpha(\mathbf{a},b)\gamma$ is a permutation braid for some permutation $(2p+3)$-braid $\gamma$ and $\sigma_{2b+e}\prec\alpha(\mathbf{a},b)\gamma$ for some $1\le e\le 2$.
\begin{enumerate}
\item [(i)] If $e=1$ and $a_b=0$, then $\sigma_{2b+1}\sigma_{2b}\prec\gamma$;
\item [(ii)] If $e=1$ and $a_b=1$, then $\sigma_{2b+1}(\sigma_{2b-1}\dots\sigma_{\vartheta(2i)})\prec\gamma$;
\item [(iii)] If $e=2$ and $a_{b+1}=0$, then $\sigma_{2b+2}(\sigma_{2b+4}\dots\sigma_{\vartheta(2i+2)})\prec\gamma$.
\item [(iv)] If $e=2$ and $a_{b+1}=1$, then $\sigma_{2b+2}\sigma_{2b+3}\prec\gamma$;
\end{enumerate}
\end{lem}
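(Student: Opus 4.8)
The plan is to reuse, almost verbatim, the strand-tracking method from the proof of Lemma~\ref{lem:Join}, now applied to the picture of $\alpha(\mathbf a,b)$ near the inserted vertical strand (the $(2b+2)$-nd strand), and to obtain two of the four cases for free from the involution $\tau$. First I would record the symmetry. Since $\tau$ sends $\sigma_{2b+1}$ to $\sigma_{2(p-b)+2}$ and $\sigma_{2b+2}$ to $\sigma_{2(p-b)+1}$, it interchanges the two roles $(b,e{=}1)$ and $(p{-}b,e{=}2)$; and since $\tau(\mathbf a)$ is the reversed, Boolean-negated tuple and $\tau(\mathbf a,b)=(\tau(\mathbf a),p-b)$, the hypothesis $a_b=0$ becomes $a_{b+1}=1$ and $a_b=1$ becomes $a_{b+1}=0$ for the transformed data. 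As $\tau$ is an automorphism that preserves $\prec$ and permutation braids and satisfies $\tau\alpha=\alpha\tau$, it carries the conclusion of (i) to that of (iv) and the conclusion of (ii) to that of (iii). So it suffices to prove (i) and (ii), i.e.\ the case $e=1$.

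For these, the argument is the one in Lemma~\ref{lem:Join} with the vertical strand playing the distinguished role. Because $S(\alpha(\mathbf a,b))=\{2,4,\dots,2b,\,2b+3,2b+5,\dots,2p+1\}$ contains neither $2b+1$ nor $2b+2$, the generator $\sigma_{2b+1}$ that is peeled off the top of $\alpha(\mathbf a,b)\gamma$ is not available from $\alpha(\mathbf a,b)$, so the crossing it represents must be produced inside $\gamma$; this forces $\sigma_{2b+1}\prec\gamma$, exactly the ``$a$ forces $A$'' step there. Then, reading the picture of $\alpha(\mathbf a,b)$ just as in Lemma~\ref{lem:Join}: when $a_b=0$ the strand based at position $2b+1$ is transposing and meets the vertical strand's new track in exactly one more crossing, which by the permutation-braid hypothesis (no two strands cross twice) must again be cancelled inside $\gamma$, giving $\sigma_{2b+1}\sigma_{2b}\prec\gamma$; when $a_b=1$ that strand is non-transposing with $\vartheta$-label $2b$, and it is dragged across the whole bundle of strands occupying positions between $2b-1$ and $\vartheta(2b)$, each crossing it once in $\alpha(\mathbf a,b)$ and hence once more in $\gamma$, giving $\sigma_{2b+1}(\sigma_{2b-1}\sigma_{2b-2}\cdots\sigma_{\vartheta(2b)})\prec\gamma$. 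This is precisely the computation of Lemma~\ref{lem:Join}(ii),(iv) transplanted to the vertical strand.

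The only real friction is bookkeeping. One must keep the two numbering conventions apart --- the $\alpha(\mathbf a)$-numbering, in which $\vartheta=\vartheta(\mathbf a)$ is defined, versus the $\alpha(\mathbf a,b)$-numbering, obtained by shifting every index $\ge 2b+2$ up by one --- and verify that in $\alpha(\mathbf a,b)$ the vertical strand is crossed by precisely the strands that run between positions $\le 2b+1$ and $\ge 2b+3$; one also has to clear the degenerate ends of the chains, where there is no later $0$-bit or no earlier $1$-bit and $\vartheta$ reaches $2p+1$ or $2$, together with the extreme values $b=1$ and $b=p-1$. These are the same routine checks already carried out in Lemma~\ref{lem:Join}, so I expect them to go through without surprises, though they do need to be done carefully.
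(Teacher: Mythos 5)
Your proposal is correct and follows essentially the same route as the paper: the paper likewise establishes (i) and (ii) by the strand-tracking picture argument of Lemma~\ref{lem:Join} applied near the inserted vertical strand (its Figure~\ref{fig:AddTailOdd}), and then deduces (iii) and (iv) from the involution $\tau$. Your write-up is in fact more explicit than the paper's two-line proof about the $\tau$-correspondence of cases and the shifted starting set of $\alpha(\mathbf a,b)$, but no new idea is involved.
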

\begin{proof}
By using Figure~\ref{fig:AddTailOdd} and the argument similar to the proof of the previous lemma, (i) and (ii) follow. The assertions (iii) and (iv) also follow using the involution $\tau$.
\end{proof}

\begin{figure}[ht]
\begin{center}
\subfigure[{$[\cdots0|\cdots]$}]{
    \setlength{\unitlength}{92.05bp}%
  \begin{picture}(1,2.04236785)%
    \put(0,0){\includegraphics[width=\unitlength]{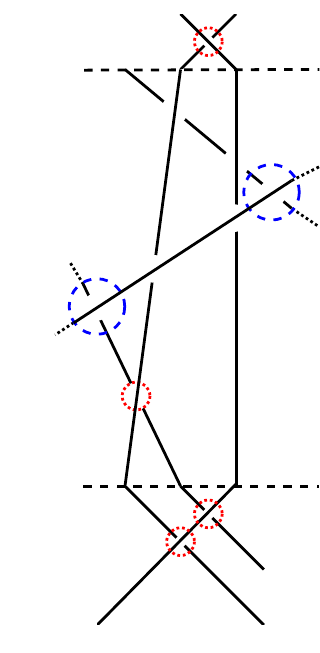}}%
    \put(0.54042629,1.88665997){\color[rgb]{0,0,0}\makebox(0,0)[lb]{\smash{$a$}}}%
    \put(0.31472644,0.77076783){\color[rgb]{0,0,0}\makebox(0,0)[lb]{\smash{$b$}}}%
    \put(0.71239147,0.40426037){\color[rgb]{0,0,0}\makebox(0,0)[lb]{\smash{$B$}}}%
    \put(0.43,0.31){\color[rgb]{0,0,0}\makebox(0,0)[lb]{\smash{$A$}}}%
  \end{picture}%
}
\subfigure[{$[\cdots1|\cdots]$}]{
    \setlength{\unitlength}{108.00080283bp}%
  \begin{picture}(1,1.7407278)%
    \put(0,0){\includegraphics[width=\unitlength]{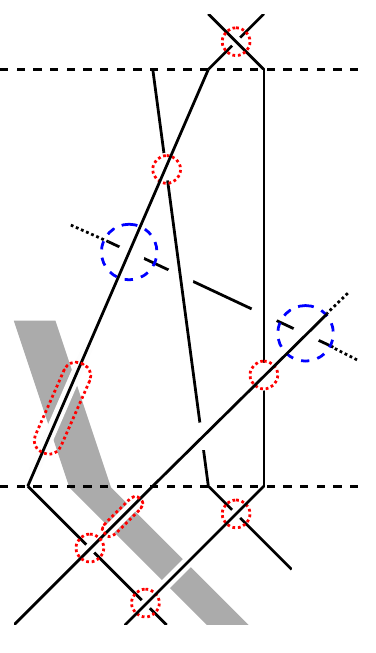}}%
    \put(0.54039712,1.61175765){\makebox(0,0)[lb]{\smash{$a$}}}%
    \put(0.75702588,0.71118508){\makebox(0,0)[lb]{\smash{$b$}}}%
    \put(0.13,0.25){\makebox(0,0)[lb]{\smash{$B$}}}%
    \put(0.26197701,0.66533802){\makebox(0,0)[lb]{\smash{$c$}}}%
    \put(0.40,0.35){\makebox(0,0)[lb]{\smash{$C$}}}%
    \put(0.26,0.10){\makebox(0,0)[lb]{\smash{$A$}}}%
    \put(0.35043832,1.26008174){\makebox(0,0)[lb]{\smash{$d$}}}%
    \put(0.67992119,0.35){\makebox(0,0)[lb]{\smash{$D$}}}%
  \end{picture}
}
\end{center}
\caption{Forced accumulation by $\sigma_{2b+1}$}
\label{fig:AddTailOdd}
\end{figure}

Lemmas~\ref{lem:Join} and \ref{lem:JoinOdd} apply to each row of $A\in\mathcal{M}_{k,n}$ and every contribution forced by the lemma must accumulate on an add-tail $SSS$-conjugator $\rho$ of $\alpha(A)$ to avoid an increase of the canonical length of $\rho^{-1}\alpha(A)\rho$. We in general expect to have a long add-tail $SSS$-conjugator for $\alpha(A)$ and the randomness among rows $A$ defeats our purpose since longer add-tail minimal $R$-conjugators of $\alpha(A)$ generate a smaller rigid conjugacy set $R(\alpha(A))$. In order to make our discussion the simplest possible, we consider the following subset $\mathcal{M}_{k,n}^0$ of $\mathcal{M}_{k,n}$ that still allows enough add-tail conjugators to serve our purpose.
For $n\ge 10$ and $k\ge 2$, we define
$$
\mathcal{M}_{k,n}^0=\{A\:|\: A^i=\mathbf0 \text{ or }\mathbf1 \text{ for } 2\le i\le p-3,\text{ and } |\mathbf{c}(A)|=k\} \quad \text{ for $n=2p+2$}
$$
and
$$
\mathcal{M}_{k,n}^0=\{(A,p-2)\:|\: A\in\mathcal{M}_{k,n-1}^0 \} \quad \text{ for $n=2p+3$}.
$$
Since the location of the vertical strand is fixed at $p-2$ for an odd n, we simply denote an element of $\mathcal{M}_{k,n}^0$ by a matrix $A\in M_{k,p}(\{0,1\})$ if no ambiguity occurs. Notice that a $k\times p$ binary matrix $A$ in $\mathcal{M}_{k,n}^0$ has constant columns except the $(p-2)$-nd and the $(p-1)$-st columns. We assume $n\ge 10$ and $k\ge 2$ whenever we consider $\mathcal{M}_{k,n}^0$ since the notation makes no sense otherwise.

\begin{lem}
\label{lem:AddtailMinimalConjugator}
Let $A\in \mathcal{M}_{k,n}^0$ and $p=\lfloor\frac{n-2}2\rfloor$. If there is $2\le j\le p-3$ such that $A^j=\mathbf 1$, then every add-tail minimal $R$-conjugator for $\alpha(A)$ is either of the form $\rho_{2i+1}=\sigma_{2i+1}\sigma_{2i}\sigma_{2i+2}\sigma_{2i+1}$ for $2\le i\le p-4$ or of the form $\rho_3=\sigma_3\sigma_2\sigma_4\sigma_3\sigma_1$. Moreover, $\rho_{2i+1}$ arises exactly when $A^i=\mathbf{0}$ and $A^{i+1}=\mathbf{1}$.
\end{lem}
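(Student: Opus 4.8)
The plan is to run the scheme of the proof of Lemma~\ref{lem:CutheadMinimalConjugator}, but with the add-tail accumulation results (Lemmas~\ref{lem:Join} and~\ref{lem:JoinOdd}) in the role of the cut-head one (Lemma~\ref{lem:StartingSets}), and then to cash in the column rigidity of $\mathcal M_{k,n}^0$. First reduce to $n=2p+2$: for odd $n$ the vertical-strand argument of Lemma~\ref{lem:CutheadMinimalConjugator} carries an add-tail $SSS$-conjugator of $\alpha(A,p-2)$ to one of $\alpha(A)$ and back, shifting indices $\ge 2(p-2)+2$ by one; since the conjugators $\rho_{2i+1}$ ($2\le i\le p-4$) and $\rho_3$ only involve generators of index at most $2p-6$, which is below the shift threshold $2(p-2)+2$, reinserting the strand leaves them unchanged, so the odd case follows from the even one.

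Assume now $n=2p+2$ and let $\rho$ be an arbitrary nontrivial add-tail $SSS$-conjugator of $\alpha(A)$. By the remark opening Section~4, $S(\rho)\subseteq\{3,5,\dots,2p-1\}$, so $\sigma_{2i+1}\prec\rho$ for some $1\le i\le p-1$. Exactly as in Lemma~\ref{lem:CutheadMinimalConjugator}, pick $\rho_1,\dots,\rho_{k-1}$ making
$$\rho^{-1}\alpha(A)\rho=(\rho^{-1}\alpha(A_1)\rho_1)(\rho_1^{-1}\alpha(A_2)\rho_2)\cdots(\rho_{k-1}^{-1}\alpha(A_k)\rho)$$
left weighted as written, and set $\rho_0=\rho_k=\rho$. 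Each block meets the hypotheses of Lemmas~\ref{lem:Join} and~\ref{lem:JoinOdd} with $\mathbf a=A_j$ the $j$-th row and $\gamma=\rho_j$; moreover, since a contribution forced into some $\rho_j$ cannot be cancelled before reaching $\rho$ without raising the canonical length of $\rho^{-1}\alpha(A)\rho$ above $k$, every such forced contribution must accumulate in $\rho$ itself.

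The engine of the argument is the dichotomy that the cited lemmas give for what $\sigma_{2i+1}\prec\rho_j$ forces in $\rho_{j-1}$: if $(A_j)_i=0$ and $(A_j)_{i+1}=1$, then Lemma~\ref{lem:Join}(i) forces the \emph{self-contained} chunk $\sigma_{2i+1}\sigma_{2i}\sigma_{2i+2}\sigma_{2i+1}$ and nothing more propagates from index $i$; in every other local pattern ($00$, $10$, or $11$) one of Lemma~\ref{lem:Join}(ii)--(v), or Lemma~\ref{lem:JoinOdd}, forces a whole block of consecutive generators running from $\sigma_{2i+1}$ to an index prescribed by the cycle $\vartheta(A_j)$, hence a new $\sigma_{2i'+1}\prec\rho_j$ with $i'\ne i$, launching a \emph{cascade}. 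Now invoke $A\in\mathcal M_{k,n}^0$: columns $1,\dots,p-3$ and column $p$ are row-independent ($A^1=\mathbf 0$, $A^p=\mathbf 1$, and $A^2,\dots,A^{p-3}$ constant), so for $1\le i\le p-4$ the dichotomy outcome at index $i$ is the same in every row. If that outcome is the cascade case, iterate: since each $\vartheta(A_j)$ is a $(p+2)$-cycle (Lemma~\ref{lem:BasicProperty}(3)) and the cascade follows this cycle, the forced part of $\rho$ never stabilises, and completing one turn around the cycle demands either $1\in S(\rho)$ (impossible, as $S(\rho)\subseteq\{3,\dots,2p-1\}$) or, equivalently, a rise of the canonical length of $\rho^{-1}\alpha(A)\rho$ past $k$, contradicting that $\rho$ is an $SSS$-conjugator. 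Hence the self-contained case holds: either $2\le i\le p-4$ with $A^i=\mathbf 0$, $A^{i+1}=\mathbf 1$, giving $\rho_{2i+1}$, or $i=1$, where $A^1=\mathbf 0$ is automatic, $A^2=\mathbf 1$ is forced, and the $\alpha_I$-block at the left end additionally forces the trailing $\sigma_1$ — just as the boundary case of Lemma~\ref{lem:StartingSets} produced $\sigma_2\sigma_1$ — giving $\rho_3$. The indices $i\in\{p-3,p-2,p-1\}$ near the right end are excluded by a separate analysis there: $A^p=\mathbf 1$ is forced, and $|\mathbf c(A)|=k\ge 2$ keeps at least one of the free columns $A^{p-2},A^{p-1}$ genuinely non-constant, so the dichotomy outcome there is row-dependent and the cascade contradiction fires in some row.

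Finally we pin down $\rho$: once $\sigma_{2i+1}\in S(\rho)$ for a self-contained index $i$, the forced chunk $\rho_{2i+1}$ (or, for $i=1$, $\rho_3$) is a prefix of $\rho$, so $\rho=\rho_{2i+1}\rho''$ where $\rho''$ is an add-tail $SSS$-conjugator of $\rho_{2i+1}^{-1}\alpha(A)\rho_{2i+1}=\alpha(A')$ with $A'\in\mathcal M^0_{k,n}$ — conjugation by $\rho_{2i+1}$ merely interchanges columns $i$ and $i+1$ of $A$, as a direct building-block picture computation shows — and the same analysis recurses. Since the $\rho_{2i+1}$'s for distinct ``$01$''-transitions commute (disjoint index ranges, two such transitions being at least $2$ apart) and each is already an $R$-conjugator, minimality forces $\rho$ to be a single $\rho_{2i+1}$ with $2\le i\le p-4$ or $\rho_3$; and these genuinely arise, $\rho_{2i+1}$ exactly when $A^i=\mathbf 0$, $A^{i+1}=\mathbf 1$. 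As rigidity is preserved throughout, minimal $SSS$- and minimal $R$-conjugators coincide here, completing the plan. The step I expect to be the main obstacle is the cascade argument — proving rigorously that a runaway cascade around the $(p+2)$-cycle $\vartheta$ is incompatible with $\rho$ being an add-tail minimal conjugator, i.e.\ identifying the precise contradiction (an illegal starting index, an $F$-overlap among the accumulated factors, or a jump in canonical length) — together with the two boundary bookkeeping points: the trailing $\sigma_1$ of $\rho_3$ and the exclusion of the non-constant right-hand columns.
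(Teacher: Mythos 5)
Your skeleton --- the dichotomy between the self-contained ``01'' chunk of Lemma~\ref{lem:Join}(i) and the cascades of Lemma~\ref{lem:Join}(ii)--(v), the accumulation of forced prefixes on $\rho$ through the block decomposition, the reduction of odd $n$ to even $n$ by deleting the vertical strand, and the commutation of the $\rho_{2i+1}$'s --- is the same as the paper's. But the step you yourself flag as the main obstacle is exactly where the proposal breaks. You claim that in the cascade case ``the forced part of $\rho$ never stabilises'' and that completing a turn around the $(p+2)$-cycle $\vartheta$ contradicts $\rho$ being an $SSS$-conjugator (an illegal starting index or a jump in canonical length). No such contradiction exists: a cascade is perfectly compatible with $\rho$ being an add-tail minimal $R$-conjugator. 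This is precisely what happens in Lemma~\ref{lem:TwistedDecycling}, where $A^2=\cdots=A^{p-3}=\mathbf 0$, the upward cascade runs all the way out to $\sigma_{2p-1}$, and the resulting long word $\rho=\alpha(A_k)^{-1}\rho_A^{-1}\Delta$ is nevertheless a legitimate add-tail minimal $R$-conjugator. So ``cascade implies contradiction'' is false; tellingly, your argument never uses the hypothesis that $A^j=\mathbf 1$ for some $2\le j\le p-3$, which is essential (the lemma fails without it).

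What actually excludes a non-``01'' starting index $i$ is different. The upward cascade (Lemma~\ref{lem:Join}(ii),(iii)) moves the forced starting letter to $\sigma_{2j+1}$ for the smallest $j>i+1$ with $A^j\ne\mathbf 1$, and the downward cascade (iv),(v) moves it to $\sigma_{2j-1}$ for the largest $j<i$ with $A^j=\mathbf 1$; since the interior columns are constant, $A^1=\mathbf 0$, and some interior column equals $\mathbf 1$, a ``01'' transition exists and the cascade terminates after finitely many steps at an index $j$ with $A^j=\mathbf 0$ and $A^{j+1}=\mathbf 1$ (switching from the upward to the downward cascade at $j=p-2$ if necessary). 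At that point $2j+1\in S(\rho)$ while the short conjugator $\rho_{2j+1}$ already exists as a minimal $R$-conjugator; minimality of $\rho$, via closure of the set of $R$-conjugators under the meet $\wedge$, then forces $\rho=\rho_{2j+1}$, which is incompatible with $2i+1\in S(\rho)$ for $i\ne j$. The contradiction is with the minimality of $\rho$ relative to a competing short conjugator, not with membership in the super summit set. Your dismissal of the right-hand non-constant columns (``the cascade contradiction fires in some row'') and of the odd-$n$ starting indices $2p-3$ and $2p-2$ (which need Lemma~\ref{lem:JoinOdd} before the vertical strand can be deleted, since deleting it may trivialise $\rho$) rests on the same invalid contradiction and needs to be rerouted through the minimality argument as well. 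The remaining parts of the proposal --- the forced prefix $\rho_{2i+1}$ (with the extra $\sigma_1$ for $i=1$), the commuting-squares observation, and the identification of minimal $SSS$- and $R$-conjugators in the rigid setting --- are sound and agree with the paper.
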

\begin{proof}
First consider even braids, that is, $n=2p+2$.
If $A^i=\mathbf{0}$ and $A^{i+1}=\mathbf{1}$ for $2\le i\le p-4$, then $\rho_{2i+1}$ is clearly an add-tail minimal $R$-conjugator in the view of Lemma~\ref{lem:Join}(i) and in fact
$$
\xymatrix{
[\cdots\mathbf{01}\cdots]\ar[r]^-{\rho_{2i+1}}&[\cdots\mathbf{10}\cdots]
}.
$$
If $A^1=\mathbf{0}$ and $A^{2}=\mathbf{1}$, then $\rho_{3}$ is also an add-tail minimal $R$-conjugator in the view of Lemma~\ref{lem:Join}(i) except the fact that the extra $\sigma_1$ is needed to make the result rigid and in fact
$$
\xymatrix{[\mathbf{01}\cdots]\ar[r]^-{\rho_3}&[\mathbf{00}\cdots]}.
$$

On the other hand, let $\rho$ be an arbitrary add-tail minimal $R$-conjugator. Since $F(\alpha(A))=\{1,2,4,\ldots,2p,2p+1\}$, we may assume that $\sigma_{2i+1}\prec\rho$ some $1\le i\le p-1$. It is suffice to show that this is impossible if $A^i\ne\mathbf{0}$ or $A^{i+1}\ne\mathbf{1}$.

Suppose $A^i=\mathbf0$ and $A^{i+1}\ne\mathbf{1}$. Then Lemma~\ref{lem:Join}(ii) and (iii) are applicable and $2j+1\in S(\rho)$ for the smallest $j>i+1$ such that $A^j\ne\mathbf 1$. If $A^j=\mathbf 0$ and $A^{j+1}\ne\mathbf{1}$, we go back and repeat the argument by replacing $i$ by $j$. Since $A^{p-2}\ne \mathbf 1$, this repetition eventually yields the cases considered next. If $A^j=\mathbf 0$ and $A^{j+1}=\mathbf{1}$, an add-tail minimal $R$-conjugator $\rho_{2j+1}$ starting with $2j+1$ exists and it does not start with $\sigma_{2i+1}$ and this contradicts the minimality of $\rho$. If $A^j\ne\mathbf 0$, then $j=p-2$ and this is the case we consider next since $A^{p-2}\ne\mathbf0$.

Suppose $A^i\ne\mathbf0$. Then Lemma~\ref{lem:Join}(iv) and (v) are applicable and so $2(j-1)+1\in S(\rho)$ for the largest $j<i$ such that $A^{j}=\mathbf{1}$. If $A^{j-1}\ne\mathbf{0}$, we we go back and repeat the argument by replacing $i$ by $j-1$. Since $A^1=\mathbf 0$, this repetition eventually yields the case considered next. If $A^{j-1}=\mathbf{0}$, an add-tail minimal $R$-conjugator $\rho_{2j-1}$ starting with $2j-1$ exists and it does not start with $\sigma_{2i+1}$ and this contradicts the minimality of $\rho$.

Now consider odd braids, that is, $n=2p+3$. When $A^i=\mathbf{0}$ and $A^{i+1}=\mathbf{1}$ for $i=1,\ldots,p-4$, the existence of the add-tail minimal $R$-conjugators $\rho_{2i+1}$ is identical to the case of even braids. Let $\rho$ be an add-tail minimal $R$-conjugator for $\alpha(A)$. Then $S(\rho)\subset\{3,5,\dots, 2p-3, 2p-2, 2p\}$ since
$$F(\alpha(A))=\{1,2,4,\dots,2p-4, 2p-1, 2p+1, 2p+2\}.$$
Let $\rho'$ be a braid obtained by removing the $(2p-2)$-nd strand from $\rho$.
Then $\rho'$ is an add-tail $SSS$-conjugator for $\alpha(A)\in \B_{n-1}$.

If $\sigma_{2i+1}\prec\rho$ for some $1\le i\le p-3$ such that $A^i\ne\mathbf{0}$ or $A^{i+1}\ne\mathbf{1}$ or $\sigma_{2p}\rho$, then $\sigma_{2i+1}\prec\rho$ for some $1\le i\le p-1$ such that $A^i\ne\mathbf{0}$ or $A^{i+1}\ne\mathbf{1}$. By the above argument for even braids, $2j+1\in S(\rho')$ for some $1\le j \le p-4$ such that $A^j=\mathbf 0$ and $A^{j+1}=\mathbf{1}$. By inserting the vertical strand back to $\rho'$, we have $2j+1\in S(\rho)$ for some $1\le j \le p-4$ such that $A^j=\mathbf 0$ and $A^{j+1}=\mathbf{1}$. Since the minimal $R$-conjugator $\rho_{2j+1}$ exits and does not start with $\sigma_{2i+1}$, $\rho$ cannot be minimal.

Finally if $2p-3$ or $2p-2$ are in $S(\rho)$,
then by Lemma~\ref{lem:JoinOdd}, $\sigma_{2i+1}\prec\rho$ for some $1\le i\le p-3$ or $\sigma_{2p}\prec\rho$ since both $A^{p-2}$ and $A^{p-1}$ are neither $\mathbf{0}$ nor $\mathbf{1}$. We have already shown that $\sigma_{2j+1}\prec\rho$ for some $1\le j \le p-4$ such that $A^j=\mathbf 0$ and $A^{j+1}=\mathbf{1}$ under this situation. Since $\rho_{2j+1}$ starts with neither $\sigma_{2p-3}$ nor $\sigma_{2p-2}$, $\rho$ cannot be minimal.
\end{proof}

Lemma~\ref{lem:AddtailMinimalConjugator} completely determined all add-tail minimal $R$-conjugators of $\alpha(A)$ for $A\in\mathcal{M}_{k,n}^0$ except the case when $A^1=A^2=\cdots=A^{p-3}=\mathbf 0$. In fact we will show that such a braid has a unique add-tail minimal $R$-conjugator. However it seems difficult to find the minimal $R$-conjugator via a direct computation since it is quite long.

As we observed in the proof of Lemma~\ref{lem:AddtailMinimalConjugator}, the conjugation by a minimal $R$-conjugator $\rho_{2i+1}$ for $2\le i\le p-4$ exchanges the $i$-th column of 0's and the $(i+1)$-st column of 1's in $A\in \mathcal{M}_{k,n}^0$ and the conjugation by $\rho_{3}$ turns the 2nd column of 1's into the column of 0's. Thus we have the following diagrams:
$$
\xymatrix{
[\cdots\mathbf{01}\cdots]\ar[r]^-{\rho_{2i+1}}&[\cdots\mathbf{10}\cdots],
\ [\mathbf{01}\cdots]\ar[r]^-{\rho_3}&[\mathbf{00}\cdots]
}
$$
We call these conjugations {\em switching}.
It is clear that for $i=1,p-2,p-1,p$, the $i$-th column and the vertical strand of an odd braid remains unchanged by switchings and so switchings are operations within
$\mathcal{M}_{k,n}^0$.

For $A\in\mathcal{M}_{k,n}^0$,
we may regard the rigid conjugacy set $R(\alpha(A))$ as a directed graph with each edge labeled by a minimal $R$-conjugator. Let $\Gamma(A)$ be the directed graph generated by switchings and their inverses starting from $\alpha(A)$. Then $\Gamma(A)$ forms a connected subgraph of $R(\alpha(A))$. Let
$$\hat A=(\mathbf{0}\mathbf{1}\cdots\mathbf{1}A^{p-2}A^{p-1}\mathbf{1})
\quad\text{and}\quad\check{A}=(\mathbf{0}\mathbf{0}\cdots\mathbf{0}A^{p-2}A^{p-1}\mathbf{1}).$$
Given any $A\in\mathcal{M}_{k,n}^0$, it is clear that we can convert $\hat A$  into $A$ and $A$ into $\check A$ via series of switchings. Thus the directed graph $\Gamma(A)$ forms a lattice and $\alpha(\hat A)$ and $\alpha(\check A)$ are the unique initial element and the unique terminal element, respectively.

If there are two directed paths from $\beta_0$ to $\beta_1$ in $\Gamma(A)$, that is, $\gamma^{-1}\beta_0\gamma=\beta_1=\gamma'^{-1}\beta_0\gamma'$ for some $\gamma$, $\gamma'$ that are products of add-tail minimal $R$-conjugators, then the region between the two directed paths is completely filled by squares of the following type:
$$
\xymatrixcolsep{3pc}
\xymatrix{
[\cdots\mathbf{01}\cdots\mathbf{01}\cdots]\ar@{}[rd]|\circlearrowleft\ar[r]^{\rho_{2j+1}}\ar[d]_{\rho_{2j'+1}}&[\cdots\mathbf{10}\cdots\mathbf{01}\cdots]\ar[d]^{\rho_{2j'+1}}\\
[\cdots\mathbf{01}\cdots\mathbf{10}\cdots]\ar[r]^{\rho_{2j+1}}&[\cdots\mathbf{10}\cdots\mathbf{10}\cdots]
}
$$
where $|j-j'|\ge 2$ by the requirement on $\beta_0$ for $\rho_{2j+1}$ and $\rho_{2j'+1}$ to exist as described in Lemma~\ref{lem:AddtailMinimalConjugator}.
Since $\rho_{2j+1}\rho_{2j'+1}=\rho_{2j'+1}\rho_{2j+1}$, we have $\gamma=\gamma'$.
Let $\rho_A$ denote the unique braid that is a product of add-tail minimal $R$-conjugators such that $\rho_A^{-1}\alpha(\hat A)\rho_A=\alpha(A)$.

\begin{exam}
\label{exm:Exam}
Let $\alpha(A)=\left[\genfrac{}{}{0pt}{}{0\ 1\ 0\ 1\ \mathit{1}}{0\ 1\ 0\ 1\ \mathit{0}}\big\vert
\genfrac{}{}{0pt}{}{\mathit{0}\ 1}{\mathit{1}\ 1}\right]\in \B_{17}$.
Two columns that are not supposed to be constant are indicated by slanted entries.

Then $\alpha(\hat A)=
\left[\genfrac{}{}{0pt}{}{0\ 1\ 1\ 1\ \mathit{1}}{0\ 1\ 1\ 1\ \mathit{0}}\big\vert
\genfrac{}{}{0pt}{}{\mathit{0}\ 1}{\mathit{1}\ 1}\right]$ and
$\alpha(\check A)=
\left[\genfrac{}{}{0pt}{}{0\ 0\ 0\ 0\ \mathit{1}}{0\ 0\ 0\ 0\ \mathit{0}}\big\vert
\genfrac{}{}{0pt}{}{\mathit{0}\ 1}{\mathit{1}\ 1}\right]$.
And the following diagram shows $\alpha(\hat A), \alpha(A)$ and $\alpha(\check A)$ in the direct graph $\Gamma(A)$.

$$
\xymatrixcolsep{4pc}
\xymatrix{
\alpha(\hat A)=\left[\genfrac{}{}{0pt}{}{0\ 1\ 1\ 1\ \mathit{1}}{0\ 1\ 1\ 1\ \mathit{0}}\big\vert
\genfrac{}{}{0pt}{}{\mathit{0}\ 1}{\mathit{1}\ 1}\right]
\ar[r]^-{\rho_3}
\ar@{=>}[dr]^{\rho_{A}}
\ar@{=>}[d]^{\rho_{\check A}}
&
\left[\genfrac{}{}{0pt}{}{0\ 0\ 1\ 1\ \mathit{1}}{0\ 0\ 1\ 1\ \mathit{0}}\big\vert
\genfrac{}{}{0pt}{}{\mathit{0}\ 1}{\mathit{1}\ 1}\right]\hphantom{=\alpha(A)}
\ar@<-3ex>[d]^{\rho_{5}}\\
\alpha(\check A)=\left[\genfrac{}{}{0pt}{}{0\ 0\ 0\ 0\ \mathit{1}}{0\ 0\ 0\ 0\ \mathit{0}}\big\vert
\genfrac{}{}{0pt}{}{\mathit{0}\ 1}{\mathit{1}\ 1}\right]
&
\left[\genfrac{}{}{0pt}{}{0\ 1\ 0\ 1\ \mathit{1}}{0\ 1\ 0\ 1\ \mathit{0}}\big\vert
\genfrac{}{}{0pt}{}{\mathit{0}\ 1}{\mathit{1}\ 1}\right]=\alpha(A)
\ar[l]_-{\rho_3\rho_7\rho_5\rho_3}
}
$$

And we have
\begin{align*}
\rho_A&=\rho_3\rho_5\\
&=(\sigma_3\sigma_2\sigma_4\sigma_3\sigma_1)
(\sigma_5\sigma_4\sigma_6\sigma_5)\\
&=(\sigma_3\cdots\sigma_6)(\sigma_2\cdots\sigma_5)\sigma_1
\end{align*}
and similarly,
\begin{align*}
\rho_{\check A}&=(\rho_3\rho_5\rho_7)(\rho_3\rho_5)\rho_3\\
&=(\sigma_3\cdots\sigma_8)(\sigma_2\cdots\sigma_7)
(\sigma_3\cdots\sigma_6)(\sigma_1\cdots\sigma_5)
(\sigma_3\sigma_4)(\sigma_1\sigma_2\sigma_3)\sigma_1\\
&=(\sigma_3\sigma_5\sigma_7)
(\sigma_4\dots\sigma_8)(\sigma_2\dots\sigma_7)
(\sigma_4\sigma_5\sigma_6)(\sigma_1\dots\sigma_5)
\sigma_4(\sigma_1\sigma_2\sigma_3)\sigma_1
.
\end{align*}
\end{exam}

In general, for any $n\ge 10$ and $A\in\mathcal{M}_{k,n}^0$, we have
\begin{align*}
\rho_{\check A}&=(\rho_3\rho_5\cdots\rho_{2p-7})(\rho_3\rho_5\cdots\rho_{2p-9})\cdots(\rho_3\rho_5)\rho_3\\
&=\sigma_1^{-1}\left(\prod_{i=4}^{p-1} (\sigma_3\cdots\sigma_{2(p-i)+2})
(\sigma_1\cdots\sigma_{2(p-i)+1})\right)\sigma_1\\
&=(\sigma_3\sigma_5\cdots\sigma_{2p-7}) \sigma_1^{-1}\left( \prod_{i=4}^{p-1}(\sigma_4\cdots\sigma_{2(p-i)+2})(\sigma_1\cdots\sigma_{2(p-i)+1}) \right)\sigma_1.\tag{$\ast$}
\end{align*}
where $p=\left\lfloor \frac{n-2}2 \right\rfloor$.
Therefore $S(\rho_{\check A})=\{3,5,\dots,2p-7\}$ and $\rho_{\check A}$ is the same for any terminal element in $\mathcal{M}_{k,n}^0$.

In the proof of the following lemma, we need to consider $\tau(\mathcal{M}_{k,n}^0)$. To avoid any unnecessary complication, we need $\mathcal{M}_{k,n}^0\cap\tau(\mathcal{M}_{k,n}^0)=\emptyset$ and this is guaranteed by assuming $n\ge 14$ from now on.

\begin{lem}
\label{lem:TwistedDecycling}
Let $A$ be an element of $\mathcal{M}_{k,n}^0$ such that $A=\check A$.
Then there exists a unique add-tail minimal $R$-conjugator
\begin{displaymath}
\rho=
\alpha(A_k)^{-1}\rho_A^{-1}\Delta
\end{displaymath}
where $A_k$ is the last row of $A$.
\end{lem}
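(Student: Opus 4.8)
\noindent\textit{Plan.} I would prove the two halves of the statement separately: that $\rho:=\alpha(A_k)^{-1}\rho_A^{-1}\Delta$ is an add-tail $R$-conjugator of $\alpha(A)$, and that it is the only minimal one. I would carry out the argument for even $n=2p+2$ and deduce the odd case $n=2p+3$ by removing the $(2p-2)$-nd strand, exactly as in the proofs of Lemmas~\ref{lem:CutheadMinimalConjugator} and \ref{lem:AddtailMinimalConjugator}, the boundary cases of Lemma~\ref{lem:JoinOdd} accounting for the vertical strand at position $p-2$.

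\smallskip
\noindent\textit{Existence.} The key point is that each switching conjugator $\rho_{2i+1}$ and $\rho_3$ is determined by the two constant columns it acts on and is insensitive to the rows, so the word $\rho_A=\rho_{\check A}$ that conjugates $\alpha(\hat A)$ to $\alpha(A)$ also conjugates $\alpha(\mathbf c^{-1}(\hat A))$ to $\alpha(\mathbf c^{-1}(A))$, where $\mathbf c^{-1}$ moves the last row to the top. Using this together with $\alpha(A_k)\alpha(A_1)\cdots\alpha(A_{k-1})=\alpha(\mathbf c^{-1}(A))$ and $\Delta^{-1}\gamma\Delta=\tau(\gamma)$, a direct computation yields $\rho^{-1}\alpha(A)\rho=\Delta^{-1}\rho_A\,\alpha(\mathbf c^{-1}(A))\,\rho_A^{-1}\Delta=\Delta^{-1}\alpha(\mathbf c^{-1}(\hat A))\Delta=\alpha(\tau\mathbf c^{-1}(\hat A))$, and since $\tau\mathbf c^{-1}(\hat A)\in\mathcal M_{k,n}$ this braid is rigid by Lemma~\ref{lem:BinaryBraid}(1); hence $\rho$ will be an $R$-conjugator once it is known to be simple. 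From the definition of $\rho$ and $\tau(\rho_A)=\Delta^{-1}\rho_A\Delta$ one reads off the identity $\alpha(A_k)^{-1}\Delta=\rho\cdot\tau(\rho_A)$, which exhibits $\rho$ as a left divisor of the simple braid $\alpha(A_k)^{-1}\Delta$ provided $\tau(\rho_A)$ right-divides $\alpha(A_k)^{-1}\Delta$, equivalently provided $\rho_A\alpha(A_k)$ is a permutation braid. Granting this, $\rho=(\rho_A\alpha(A_k))^{-1}\Delta$ is simple and a prefix of $\alpha(A_k)^{-1}\Delta$, hence an add-tail $R$-conjugator (and it is not a cut-head conjugator, since a nontrivial prefix of $\alpha(A_k)^{-1}\Delta$ has an odd index in its starting set while $\alpha(A_1)$ does not). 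The only remaining point is that $\rho_A\alpha(A_k)$ is simple, which I would check directly from the explicit word $(\ast)$ for $\rho_{\check A}$ together with the block description of $\alpha(A_k)$.

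\smallskip
\noindent\textit{Uniqueness.} Here I would run the forced-accumulation machinery of Lemmas~\ref{lem:Join} and \ref{lem:JoinOdd} around all $k$ rows. Let $\rho'$ be any nontrivial add-tail $SSS$-conjugator of $\alpha(\check A)$. Since $F(\alpha(\check A))=\{1,2,4,\dots,2p,2p+1\}$, the starting set of $\rho'$ avoids $F(\alpha(A_k))$, so $\sigma_{2i+1}\prec\rho'$ for some $i$. Because $\check A$ has no column equal to $\mathbf 1$ among its first $p-3$ columns, parts (ii) and (iii) of Lemma~\ref{lem:Join} never terminate the cascade in the interior: the forced generators sweep to the right one column at a time until they reach the three non-constant boundary columns $p-2,\,p-1,\,p$ (where Lemma~\ref{lem:JoinOdd} supplies the remaining possibilities in the odd case), after which, because $\rho'^{-1}\alpha(\check A)\rho'$ must again have canonical length $k$, the forcing propagates through the remaining factors $\alpha(A_{k-1}),\dots,\alpha(A_1)$ and feeds back. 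Bookkeeping the generators forced at each stage shows that the accumulated prefix of $\rho'$ is exactly $\rho$; hence $\rho\prec\rho'$ for every add-tail $R$-conjugator of $\alpha(\check A)$, so $\rho$ is the unique minimal one.

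\smallskip
\noindent\textit{Main obstacle.} I expect the hard part to be precisely this last accumulation: correctly propagating the forcing across all $k$ cyclically arranged all-zero rows, in particular through the three non-constant boundary columns and the vertical strand of the odd case, and verifying both that the cascade cannot close up early along a different route and that no proper prefix of $\rho$ is already an $R$-conjugator. The subsidiary fact that $\rho=\alpha(A_k)^{-1}\rho_A^{-1}\Delta$ is a permutation braid is routine but long, as the authors note, and is cleanest to extract from the closed form $(\ast)$ rather than from the recursive description of $\rho_A$.
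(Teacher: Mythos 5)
Your existence half is essentially sound and is a more direct route than the paper takes for that part: the identity $\rho^{-1}\alpha(A)\rho=\alpha(\tau\mathbf{c}^{-1}(\hat A))$ is exactly what the paper also arrives at (at the end of its argument), and your factorization $\alpha(A_k)^{-1}\Delta=\rho\cdot\tau(\rho_A)$ correctly reduces the simplicity of $\rho$ to the finite (if tedious) check that $\rho_A\alpha(A_k)$ is a permutation braid, which can indeed be read off from $(\ast)$. The genuine gap is in the minimality/uniqueness half, which is the actual content of the lemma and which you leave as an assertion. Lemmas~\ref{lem:Join} and \ref{lem:JoinOdd} only force specific short cascades of the form $\sigma_{2i+1}(\sigma_{2i+3}\cdots\sigma_{\vartheta(2i+2)})$, then $\sigma_{2j+1}$, and so on up to $\sigma_{2p-1}$; they say nothing directly about the many other crossings making up the permutation braid $\rho=\alpha(A_k)^{-1}\rho_A^{-1}\Delta$. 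Writing that ``bookkeeping the generators forced at each stage shows that the accumulated prefix of $\rho'$ is exactly $\rho$'' restates the conclusion rather than proving it: you would have to show both that every crossing of $\rho$ is eventually forced and that the forced prefix does not stabilize at some proper prefix that is already an $R$-conjugator, and neither is addressed. This is precisely the direct computation the authors explicitly decline to attempt (``it seems difficult to find the minimal $R$-conjugator via a direct computation since it is quite long'').

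The paper's proof avoids this by an indirect argument. From Lemma~\ref{lem:Join} it extracts only that $\sigma_{2p-1}\prec\rho$. It then takes $\gamma$ minimal among add-tail $R$-conjugators of the \emph{initial} element $\alpha(\hat A)$ that start with $\sigma_{2p-1}$, observes via Lemma~\ref{lem:AddtailMinimalConjugator} and the lattice structure of $\Gamma(A)$ that the conjugation path must pass through $\alpha(\check A)$, so $\rho_A\prec\gamma$, and then plays the two minimality hypotheses against each other (using that $\sigma_{2p-1}$ commutes with $\rho_A$ by $(\ast)$) to conclude $\gamma=\rho_A\rho$. The starting-set computation $S(\rho_A\rho)=\{3,5,\dots,2p-1\}$ combined with $S(\alpha(\hat A_1))=\{2,4,\dots,2p\}$ forces $\Delta\prec\sigma_1\sigma_{2p+1}\alpha(\hat A)\rho_A\rho$; the right-weighted form then yields $\rho=\alpha(A_k)^{-1}\rho_A^{-1}\Delta\beta^{-1}$ with $\beta\prec\sigma_1\sigma_{2p+1}$, and $\beta=e$ because an initial element of $\tau(\mathcal{M}_{k,n}^0)$ admits no $R$-conjugator starting with $\sigma_1$ or $\sigma_{2p+1}$. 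Without an argument of this kind, or a genuinely completed forcing computation, your proof of uniqueness does not go through, and the odd case (which you reduce to the even one by deleting the vertical strand) inherits the same gap.
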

\begin{proof}
Suppose $n$ is even. By hypothesis, $A=\check A=(\mathbf{0}\cdots\mathbf{0} A^{p-2}A^{p-1}\mathbf{1})$ for some $A^{p-2}, A^{p-1}\neq\mathbf{0},\mathbf{1}$.
Then $\hat A=(\mathbf{0}\mathbf{1}\cdots\mathbf{1}A^{p-2}A^{p-1}\mathbf{1})$.
Let $\rho$ be an add-tail minimal $R$-conjugator for $\alpha(A)$. Then $2i+1\in S(\rho)$ for some $1\le i\le p-1$.
By a repeated application of Lemma~\ref{lem:Join}(ii) and (iii), we eventually find $\sigma_{2p-1}\prec\rho$ whatever $i$ is.

Let $\gamma$ be a minimal element among all add-tail $R$-conjugators for $\alpha(\hat A)$ starting with $\sigma_{2p-1}$. Since no add-tail minimal $R$-conjugators for braids in $\Gamma(A)$ except $\check A$ start with $\sigma_{2p-1}$ by Lemma~\ref{lem:AddtailMinimalConjugator}, the conjugation of $\alpha(\hat A)$ by some prefix of $\gamma$ must first reach $\check A$ and therefore $\rho_A\prec\gamma$.
Let $\gamma=\rho_A \gamma'$. Then $\gamma'$ is an add-tail $R$-conjugator for $\alpha(A)$.
Since $\sigma_{2p-1}$ and $\rho_A$ commute by $(\ast)$, $2p-1\in S(\gamma')$ and so $\rho\prec \gamma'$ by the minimality of $\rho$.

On the other hand, since $\rho_A\rho$ is an add-tail $R$-conjugator for $\alpha(\hat A)$ starting with $\sigma_{2p-1}$, $\gamma\prec\rho_A\rho$ by the minimality of $\gamma$ and so $\gamma'\prec\rho$. Therefore $\gamma'=\rho$ and $\gamma=\rho_A\rho$.

But any $SSS$-conjugator for $\alpha(\hat A)$ starting with $\sigma_{2p-1}$ must contain $\sigma_{2p-5}\sigma_{2p-3}$ as a prefix by Lemma~\ref{lem:Join} and so $S(\rho_A\rho)=\{3,5,\dots,2p-1\}$
because $S(\rho_A)=\{3,5,\dots,2p-7\}$ as we discussed just before this lemma.
Since $\rho_A\rho$ is an add-tail $SSS$-conjugator, $\rho_A\rho\prec\alpha(\hat A)\rho_A\rho$. Since $S(\alpha(\hat A_1))=\{2,4,\dots,2p\}$, we have $\{2,3,\dots,2p\}\subset S(\alpha(\hat A)\rho_A\rho)$ and so
$$\Delta\prec \sigma_1\sigma_{2p+1}\alpha(\hat A)\rho_A\rho.$$

Then by considering the right-weighted form for $\sigma_1\sigma_{2p+1}\alpha(\hat A)\rho_A\rho$, there exists $\beta\prec\sigma_1\sigma_{2p+1}$ such that $\alpha(\hat A_k)\rho_A\rho\beta=\Delta$, that is, $\rho=\rho_A^{-1}\alpha(\hat A_k)^{-1}\Delta\beta^{-1}$. Since $\rho_A^{-1}\alpha(\hat A)\rho_A=\alpha(A)$, $\rho_A^{-1}\alpha(\hat A_k)\rho_A=\alpha(A_k)$.
Thus
$
\rho=\rho_A^{-1}\alpha(\hat A_k)^{-1}\Delta\beta^{-1}
=\alpha(A_k)^{-1}\rho_A^{-1}\Delta \beta^{-1}$
and so
\begin{displaymath}
\rho^{-1}\alpha(A)\rho=\beta\Delta^{-1}\rho_A\alpha(A_k)\alpha(A)
\alpha(A_k)^{-1}\rho_A^{-1}\Delta\beta^{-1}
=\beta\alpha(\tau\mathbf{c}^{-1}(\hat A))\beta^{-1},
\end{displaymath}
since $\alpha(A_k)\alpha(A)\alpha(A_k)^{-1}$ is again a terminal element and is sent to its initial element via the conjugation by $\rho_A^{-1}$. If we consider the involuted versions of Lemma~\ref{lem:Join} and \ref{lem:AddtailMinimalConjugator} for $\tau(\mathcal{M}_{k,n}^0)$, an initial element $\alpha(\tau\mathbf{c}^{-1}(\hat A))$ cannot have an $R$-conjugator starting with $\sigma_1$ or $\sigma_{2p+1}$. Thus $\beta=e$ and so we have $\rho=\alpha(A_k)^{-1}\rho_A^{-1}\Delta$
as desired and $\rho^{-1}\alpha(A)\rho=\alpha(\tau\mathbf{c}^{-1}(\hat A))$.

Suppose $n$ is odd. Assume $\alpha(A,p-2)=[\mathbf{0}\dots\mathbf{0}A^{p-2}|A^{p-1}\mathbf{1}]$.
Let $\rho$ be an add-tail minimal $R$-conjugator for $\alpha(A,p-2)$,
and $\rho'\in\B_{n-1}$ be the braid obtained by removing the $(2p-2)$-nd strand from $\rho$.
Then $\rho'$ is nontrivial. Indeed, if $\rho'$ were trivial then $S(\gamma)$ consists of either $2p-3$ or $2p-2$. But Lemma~\ref{lem:JoinOdd} forces $S(\rho)$ to contain $2i+1$ for $1\le i\le p-3$ or $2p$ under the assumption that $\rho$ is an add-tail $SSS$-conjugator and this is a contradiction. Thus $\rho'$ is an add-tail $R$-conjugator for $\alpha(A)\in\B_{n-1}$.
The above argument for even braids can be applied to $\alpha(A)$ and $\rho'$ to obtain an add-tail $R$-conjugator $\rho'=\alpha(A_k)^{-1}\rho_A^{-1}\Delta\beta'^{-1}$ for some $\beta'\prec\sigma_1\sigma_{2p+1}\in\B_{n-1}$.
In addition we have $\sigma_{2p}\prec\rho$ since $\sigma_{2p-1}\prec\rho'$.

Let $\bar\rho_A$ be the $n$-braid obtained from $\rho_A$ by inserting a vertical strand between the $(2p-3)$-rd and the $(2p-2)$-nd punctures. Note that $\bar\rho_A$ and $\rho_A$ has the identical braid word. Clearly $\alpha(A_k,p-2)^{-1}\bar\rho_A^{-1}\Delta$ is an add-tail $R$-conjugator and contains $\sigma_{2p}$ as a prefix where $\Delta$ is understood to be the half twist of the braid index being considered. By the minimality of $\rho$, $$\rho\beta=\alpha(A_k,p-2)^{-1}\bar\rho_{A}^{-1}\Delta$$ for some permutation braid $\beta$.
By removing the $(2p-2)$-nd strand in both sides of this equation, we have $\rho'\beta''=\alpha(A_k)^{-1}\rho_A^{-1}\Delta$ where $\beta''$ is the braid obtained from $\beta$.
Thus $\rho'\beta'=\rho'\beta''$ and so $\beta'=\beta''$.

Let $\rho(2p-2)=m$. Since the $(2p-2)$-nd strand is vertical in $\bar\rho_{A}\alpha(A_k,p-2)$, we have $(\rho\beta)(2p-2)=\beta(m)=\Delta(2p-2)=5$.
Since $\beta'\prec\sigma_1\sigma_{2p+1}$, $\beta=\bar\beta\sigma_1^{\epsilon_1}\sigma_{n-1}^{\epsilon_2}$ where $\epsilon_i$'s are either 0 or 1, and $\bar\beta$ becomes trivial when the $m$-th strand is removed.

Now we have
\begin{align*}
\rho^{-1}\alpha(A,p-2)\rho
&=\bar\beta\left\{(\sigma_1^{\epsilon_1}\sigma_{n-1}^{\epsilon_2})\alpha(\tau\mathbf{c}^{-1}(\hat A,p-2))(\sigma_1^{\epsilon_1}\sigma_{n-1}^{\epsilon_2})^{-1}\right\}\bar\beta^{-1}.
\end{align*}
Let $W$ be the reversed word of the part inside the braces in the right hand side. Then $\bar\beta^{-1}W\bar\beta$ is obviously in its super summit set. If $\bar\beta$ were nontrivial, the braid obtained by removing the $m$-th strand is also nontrivial by the same reason why $\rho'$ was nontrivial in the above. But this contradicts the assumption of $\bar\beta$. Thus $\bar\beta=e$ and $\beta\prec\sigma_1\sigma_{n-1}$. This implies $\beta=e$, otherwise $\rho$ cannot be an $R$-conjugator.
Consequently, we have $\rho=\alpha(A_k,p-2)^{-1}\bar\rho_{A}^{-1}\Delta$ and the notations for the vertical strand can be suppressed to obtain the desired add-tail minimal $R$-conjugator.
\end{proof}

We call this type of conjugation {\em initializing} because it convert a terminal element from $\mathcal{M}_{k,n}^0$ or $\tau(\mathcal{M}_{k,n}^0)$ to an initial element in $\tau(\mathcal{M}_{k,n}^0)$ or $\mathcal{M}_{k,n}^0$, respectively.

\section{Main theorem}

We summarize our results in the following theorem.

\begin{thm}
\label{thm:pA}
For $A\in\mathcal{M}_{k,n}^0$, the rigid conjugacy set $R(\alpha(A))$ is contained in $\alpha(\mathcal{M}_{k,n}^0\cup\tau(\mathcal{M}_{k,n}^0))$.
\end{thm}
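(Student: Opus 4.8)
The plan is to show that $R(\alpha(A))$ is exhausted by the braids we can already reach from $\alpha(A)$ using the minimal $R$-conjugators we have classified. Recall that $R(\alpha(A))$ is a connected directed graph whose edges are labelled by minimal $R$-conjugators, and that for a rigid braid these split into the mutually exclusive classes of cut-head and add-tail conjugators. By Lemma~\ref{lem:CutheadMinimalConjugator} the only cut-head minimal $R$-conjugator of $\alpha(A)$ is $\alpha(A_1)$ — that is, cycling — and $\mathbf{c}\alpha=\alpha\mathbf{c}$ preserves $\mathcal{M}_{k,n}^0$ since cycling rows does not change which columns are constant and does not change $|\mathbf{c}(A)|=k$. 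So the cut-head direction never leaves $\alpha(\mathcal{M}_{k,n}^0)$. The work is entirely in controlling the add-tail direction.

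First I would establish the claim as a statement about a single $\Gamma$-component. Fix $A\in\mathcal{M}_{k,n}^0$ and consider the subgraph $\Gamma(A)\subset R(\alpha(A))$ generated by switchings; by the discussion preceding Example~\ref{exm:Exam} it is a lattice with unique initial vertex $\alpha(\hat A)$ and unique terminal vertex $\alpha(\check A)$, and all its vertices lie in $\alpha(\mathcal{M}_{k,n}^0)$. By Lemma~\ref{lem:AddtailMinimalConjugator}, every add-tail minimal $R$-conjugator out of a non-terminal vertex of $\Gamma(A)$ is a switching $\rho_{2i+1}$ or $\rho_3$, hence stays inside $\Gamma(A)$ — the only way an add-tail edge can leave $\Gamma(A)$ is from the terminal vertex $\alpha(\check A)$ (or from a vertex all of whose columns $A^1,\dots,A^{p-3}$ are $\mathbf 0$, the exceptional case; I would handle this by noting that such a matrix \emph{is} of the form $\check A$ for the appropriate choice, so the terminal-vertex analysis covers it). At the terminal vertex, Lemma~\ref{lem:TwistedDecycling} identifies the unique add-tail minimal $R$-conjugator $\rho=\alpha(A_k)^{-1}\rho_A^{-1}\Delta$ and computes $\rho^{-1}\alpha(\check A)\rho=\alpha(\tau\mathbf{c}^{-1}(\hat{\check A}))$, which is the initial vertex of a $\Gamma$-component lying inside $\alpha(\tau(\mathcal{M}_{k,n}^0))$ — this is the "initializing" move. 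Symmetrically, by applying the $\tau$-involuted versions of Lemmas~\ref{lem:AddtailMinimalConjugator} and \ref{lem:TwistedDecycling} (legitimate because $\tau\alpha=\alpha\tau$ and, for $n\ge 14$, $\mathcal{M}_{k,n}^0\cap\tau(\mathcal{M}_{k,n}^0)=\emptyset$), the add-tail moves out of a $\tau$-component again either stay in $\alpha(\tau(\mathcal{M}_{k,n}^0))$ or, via initializing, return to $\alpha(\mathcal{M}_{k,n}^0)$.

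Putting this together, I would argue as follows. Let $\mathcal{G}=\alpha(\mathcal{M}_{k,n}^0\cup\tau(\mathcal{M}_{k,n}^0))$. We have shown: (a) $\alpha(A)\in\mathcal{G}$; (b) every cut-head minimal $R$-conjugator edge from a vertex of $\mathcal{G}$ (cycling, or its $\tau$-analogue) has its target in $\mathcal{G}$; (c) every add-tail minimal $R$-conjugator edge from a vertex of $\mathcal{G}$ — switching, or initializing, or their $\tau$-analogues — has its target in $\mathcal{G}$; (d) since $R(\alpha(A))$ is generated from $\alpha(A)$ by iterated conjugation by minimal $R$-conjugators, and $\mathcal{G}$ is closed under all such moves, $R(\alpha(A))\subseteq\mathcal{G}$. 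I also need the reverse edges: a minimal $R$-conjugator can be traversed backwards, but a backward switching is still a switching (staying in the same $\Gamma$-component), a backward cycling is still within $\mathcal{M}_{k,n}^0$, and a backward initializing edge goes from an initial element of a $\tau$-component to a terminal element of an $\mathcal{M}_{k,n}^0$-component — all inside $\mathcal{G}$ — so closure holds for the undirected graph as well.

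The main obstacle is step (c) in the exceptional case $A^1=\cdots=A^{p-3}=\mathbf 0$, i.e.\ verifying that the only add-tail minimal $R$-conjugator available there is the initializing one of Lemma~\ref{lem:TwistedDecycling} and that no \emph{other} short add-tail conjugator exists that Lemma~\ref{lem:AddtailMinimalConjugator} did not rule out (its hypothesis explicitly excludes this matrix). This is exactly the content Lemma~\ref{lem:TwistedDecycling} is designed to supply — its proof shows any add-tail minimal $R$-conjugator $\rho$ must satisfy $\sigma_{2p-1}\prec\rho$ and then pins $\rho$ down uniquely — so the obstacle is really just bookkeeping: checking that the terminal vertices of $\mathcal{M}_{k,n}^0$ are precisely the matrices with $A^1=\cdots=A^{p-3}=\mathbf 0$, that $\check{\hat{\check A}}$-type composites land back on genuine elements of $\mathcal{M}_{k,n}^0$ after one $\tau$, and that the constraint $|\mathbf{c}(A)|=k$ in the definition of $\mathcal{M}_{k,n}^0$ is preserved throughout (it is, since it is a conjugacy-type invariant detected by rigidity and canonical length). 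Once these are in place the theorem follows by the closure argument above.
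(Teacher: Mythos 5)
Your proposal is correct and follows essentially the same route as the paper: the paper's proof is precisely the closure argument you describe, citing Lemma~\ref{lem:CutheadMinimalConjugator} to identify cut-head conjugation with cycling and Lemmas~\ref{lem:AddtailMinimalConjugator} and \ref{lem:TwistedDecycling} to identify add-tail conjugation with switching or initializing, then concluding that $\alpha(\mathcal{M}_{k,n}^0\cup\tau(\mathcal{M}_{k,n}^0))$ is closed under all these operations. The extra bookkeeping you flag (the exceptional all-$\mathbf{0}$ case, the $\tau$-involuted versions, reverse edges) is exactly what those lemmas are already designed to cover, so your added care is sound but not a departure from the paper's argument.
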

\begin{proof}
By Lemma~\ref{lem:CutheadMinimalConjugator},
the conjugation of $\alpha(A)$ by a cut-head minimal $R$-conjugator is cycling. By Lemma~\ref{lem:AddtailMinimalConjugator} and Lemma~\ref{lem:TwistedDecycling}, the conjugation by an add-tail minimal $R$-conjugator is either switching or initializing. Thus $\alpha(\mathcal{M}_{k,n}^0\cup\tau(\mathcal{M}_{k,n}^0))$ is closed under all of these operations and so includes the rigid conjugacy set $R(\alpha(A))$.
\end{proof}

Figure~\ref{fig:RSSS} displays $R(\alpha(A))$ for $A\in \mathcal{M}_{5,n}^0\cup\tau(\mathcal{M}_{5,n}^0)$ as a directed graph. The ordinary arrows and the dashed arrows indicates cycling and initializing, respectively. All switching occurs inside lattices indicated by shaded rhombi.

\begin{figure}[ht]
\begin{center}
\begingroup
   \setlength{\unitlength}{334.44574023pt}
\begin{picture}(1,0.50745522)%
    \put(0,0){\includegraphics[width=\unitlength]{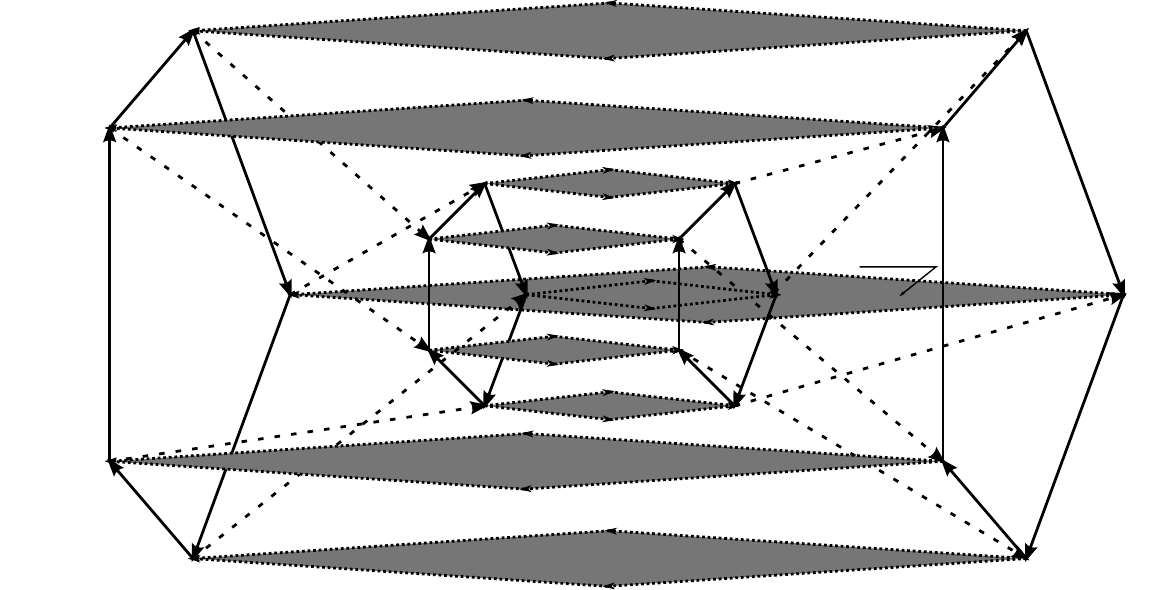}}%
    \put(0.34537868,0.34223331){\color[rgb]{0,0,0}\makebox(0,0)[lb]{\smash{\tiny$\alpha(\hat A)$}}}%
    \put(0.38125894,0.24655262){\color[rgb]{0,0,0}\makebox(0,0)[lb]{\smash{\tiny$\mathbf{c}\alpha(\hat A)$}}}%
    \put(0.3431242,0.15540249){\color[rgb]{0,0,0}\makebox(0,0)[lb]{\smash{\tiny$\mathbf{c}^2\alpha(\hat A)$}}}%
    \put(0.29107045,0.19871227){\color[rgb]{0,0,0}\makebox(0,0)[lb]{\smash{\tiny$\mathbf{c}^3\alpha(\hat A)$}}}%
    \put(0.30168166,0.29439296){\color[rgb]{0,0,0}\makebox(0,0)[lb]{\smash{\tiny$\mathbf{d}\alpha(\hat A)$}}}%
    \put(0.63741839,0.34223331){\color[rgb]{0,0,0}\makebox(0,0)[lb]{\smash{\tiny$\alpha(\check A)$}}}%
    \put(0.67329865,0.24655262){\color[rgb]{0,0,0}\makebox(0,0)[lb]{\smash{\tiny$\mathbf{c}\alpha(\check A)$}}}%
    \put(0.63981041,0.15087193){\color[rgb]{0,0,0}\makebox(0,0)[lb]{\smash{\tiny$\mathbf{c}^2\alpha(\check A)$}}}%
    \put(0.58957804,0.19871227){\color[rgb]{0,0,0}\makebox(0,0)[lb]{\smash{\tiny$\mathbf{c}^3\alpha(\check A)$}}}%
    \put(0.58957804,0.29439296){\color[rgb]{0,0,0}\makebox(0,0)[lb]{\smash{\tiny$\mathbf{d}\alpha(\check A)$}}}%
    \put(0.81921171,0.39007366){\color[rgb]{0,0,0}\makebox(0,0)[lb]{\smash{\tiny$\mathbf{d}\tau\alpha(\hat A)$}}}%
    \put(0.88858021,0.47379426){\color[rgb]{0,0,0}\makebox(0,0)[lb]{\smash{\tiny$\tau\alpha(\hat A)$}}}%
    \put(0.97230081,0.24655262){\color[rgb]{0,0,0}\makebox(0,0)[lb]{\smash{\tiny$\mathbf{c}\tau\alpha(\hat A)$}}}%
    \put(0.89097222,0.01931097){\color[rgb]{0,0,0}\makebox(0,0)[lb]{\smash{\tiny$\mathbf{c}^2\tau\alpha(\hat A)$}}}%
    \put(0.82706475,0.10514584){\color[rgb]{0,0,0}\makebox(0,0)[lb]{\smash{\tiny$\mathbf{c}^3\tau\alpha(\hat A)$}}}%
    \put(0.09336256,0.47379426){\color[rgb]{0,0,0}\makebox(0,0)[lb]{\smash{\tiny$\tau\alpha(\check A)$}}}%
    \put(0.17941839,0.24655262){\color[rgb]{0,0,0}\makebox(0,0)[lb]{\smash{\tiny$\mathbf{c}\tau\alpha(\check A)$}}}%
    \put(0.07311086,0.01931097){\color[rgb]{0,0,0}\makebox(0,0)[lb]{\smash{\tiny$\mathbf{c}^2\tau\alpha(\check A)$}}}%
    \put(-0.00083721,0.10303158){\color[rgb]{0,0,0}\makebox(0,0)[lb]{\smash{\tiny$\mathbf{c}^3\tau\alpha(\check A)$}}}%
    \put(0.00259409,0.39007366){\color[rgb]{0,0,0}\makebox(0,0)[lb]{\smash{\tiny$\mathbf{d}\tau\alpha(\check A)$}}}%
    \put(0.5010734,0.34223331){\color[rgb]{0,0,0}\makebox(0,0)[lb]{\smash{\tiny$\Gamma( A)$}}}%
    \put(0.53695366,0.24655262){\color[rgb]{0,0,0}\makebox(0,0)[lb]{\smash{\tiny$\mathbf{c}\Gamma( A)$}}}%
    \put(0.45323306,0.19871227){\color[rgb]{0,0,0}\makebox(0,0)[lb]{\smash{\tiny$\mathbf{c}^3\Gamma( A)$}}}%
    \put(0.5010734,0.15087193){\color[rgb]{0,0,0}\makebox(0,0)[lb]{\smash{\tiny$\mathbf{c}^2\Gamma( A)$}}}%
    \put(0.45323306,0.29439296){\color[rgb]{0,0,0}\makebox(0,0)[lb]{\smash{\tiny$\mathbf{d}\Gamma( A)$}}}%
    \put(0.52020954,0.47379426){\color[rgb]{0,0,0}\makebox(0,0)[lb]{\smash{\tiny$\tau\Gamma( A)$}}}%
    \put(0.7401033,0.28379736){\color[rgb]{0,0,0}\makebox(0,0)[lb]{\smash{\tiny$\mathbf{c}\tau\Gamma( A)$}}}%
    \put(0.43648894,0.10303158){\color[rgb]{0,0,0}\makebox(0,0)[lb]{\smash{\tiny$\mathbf{c}^3\tau\Gamma( A)$}}}%
    \put(0.52130262,0.02272957){\color[rgb]{0,0,0}\makebox(0,0)[lb]{\smash{\tiny$\mathbf{c}^2\tau\Gamma( A)$}}}%
    \put(0.43648894,0.39007366){\color[rgb]{0,0,0}\makebox(0,0)[lb]{\smash{\tiny$\mathbf{d}\tau\Gamma( A)$}}}%
  \end{picture}%
\endgroup
\end{center}
\caption{The directed graph of $R(\alpha( A))$}
\label{fig:RSSS}
\end{figure}

Finally we give the main theorem.

\begin{thm}\label{thm:SizeofRSSS}
For $n\ge 14$, let $ A\in \mathcal{M}_{k,n}^0$ be given arbitrarily. Then the rigid conjugacy set $R(\alpha(A))$ of $\alpha(A)$ is made of pseudo-Anosov braids and its size is given by
\begin{displaymath}
|R(\alpha( A))|
=k\cdot 2^{p-3}\ge \frac 1{23} k\sqrt{2}^n
\end{displaymath}
where $p=\lfloor\frac{n-2}2\rfloor$.
\end{thm}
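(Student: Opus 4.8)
The plan is to establish the statement in three steps: that $\alpha(A)$ is pseudo-Anosov (hence so is every braid conjugate to it, in particular every element of $R(\alpha(A))$); that $R(\alpha(A))$ is a disjoint union of $2k$ copies of the switching lattice, of total size $k\cdot 2^{p-3}$; and finally the elementary estimate $k\cdot 2^{p-3}\ge\frac1{23}k\sqrt{2}^{\,n}$.

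\emph{Pseudo-Anosov.} Since the Nielsen--Thurston type is a conjugacy invariant and every element of $R(\alpha(A))$ is conjugate to $\alpha(A)$, it suffices to show $\alpha(A)$ is neither reducible nor periodic. By Lemma~\ref{lem:BinaryBraid}(1), $\alpha(A)$ is rigid, so $RSSS(\alpha(A))=R(\alpha(A))$, which by Theorem~\ref{thm:pA} lies in $\alpha(\mathcal M_{k,n}^0\cup\tau(\mathcal M_{k,n}^0))\subset\alpha(\mathcal M_{k,n})$; if $\alpha(A)$ were reducible then by \cite{BNG} its $RSSS$ would contain a braid with a standard reduction system, contradicting Lemma~\ref{lem:BinaryBraid}(2). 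For periodicity, Lemma~\ref{lem:BinaryBraid}(1) gives the left canonical form $\alpha(A)=\alpha(A_1)\cdots\alpha(A_k)$ with infimum $0$; since $F(\alpha(A_i))\supset S(\alpha(A_j))$ for all $i,j$, the word $(\alpha(A_1)\cdots\alpha(A_k))^m$ is left weighted, so $\alpha(A)^m$ has infimum $0$ and canonical length $mk\ge2m>0$, whereas a power of any periodic braid equals a power of $\Delta^2$ and has canonical length $0$; contradiction.

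\emph{Counting.} Because $\alpha$ is injective (Lemma~\ref{lem:BinaryBraid}(1)), it is enough to count the matrices $C\in\mathcal M_{k,n}^0\cup\tau(\mathcal M_{k,n}^0)$ with $\alpha(C)\in R(\alpha(A))$. The minimal $R$-conjugators of such $\alpha(C)$ are, by Lemma~\ref{lem:CutheadMinimalConjugator}, Lemma~\ref{lem:AddtailMinimalConjugator}, Lemma~\ref{lem:TwistedDecycling} and their $\tau$-conjugated versions, of exactly three kinds: cycling (the unique cut-head one), switching (an add-tail one that merely rearranges the constant columns $2,\dots,p-3$), and initializing (the unique add-tail one, available only at a terminal element of one of the lattices, crossing from one side to the other). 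Hence, starting at $\alpha(A)$: switchings generate $\Gamma(A)$, whose members all share the columns $A^{p-2},A^{p-1}$, and from $\hat A$ every one of the $2^{p-4}$ patterns of columns $2,\dots,p-3$ is reachable (since $\rho_3$ toggles column $2$ and the $\rho_{2i+1}$ transport $\mathbf 1$'s), so $|\Gamma(A)|=2^{p-4}$; cycling gives the translates $\mathbf c^{\,j}\Gamma(A)$ for $0\le j<k$, which are distinct because $|\mathbf c(A)|=k$ forces the pair $(A^{p-2},A^{p-1})$ to have period exactly $k$; and initializing carries each terminal element to an initial element on the $\tau$-side, producing $k$ further lattices $\mathbf c^{\,j}\tau\Gamma(A)$, each of size $2^{p-4}$ because $\tau$ is a direction-preserving automorphism of the graph $R(\alpha(A))$ (note $\tau\alpha(A)$ is conjugate to $\alpha(A)$). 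Since $n\ge14$ gives $\mathcal M_{k,n}^0\cap\tau(\mathcal M_{k,n}^0)=\emptyset$, these $2k$ lattices are pairwise disjoint, and their union contains $\alpha(A)$ and is closed under switching, cycling and initializing, so it is all of $R(\alpha(A))$; thus $|R(\alpha(A))|=2k\cdot 2^{p-4}=k\cdot 2^{p-3}$. The genuinely delicate point here is promoting the one-sided inclusion of Theorem~\ref{thm:pA} to this exact description: one must verify closure under \emph{every} minimal $R$-conjugator (which is where the exceptional terminal case of Lemma~\ref{lem:TwistedDecycling} and the involuted add-tail lemmas are needed) as well as full reachability from $\alpha(A)$, while keeping the bookkeeping of the $2k$ distinct cyclic translates straight.

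\emph{The estimate.} If $n=2p+2$ then $p-3=\frac n2-4$, so $2^{p-3}=\sqrt{2}^{\,n}/16\ge\sqrt{2}^{\,n}/23$; if $n=2p+3$ then $p-3=\frac{n-9}2$, so $2^{p-3}=\sqrt{2}^{\,n-9}=\sqrt{2}^{\,n}/(16\sqrt2)\ge\sqrt{2}^{\,n}/23$ since $(16\sqrt2)^2=512<529=23^2$. Multiplying by $k$ gives $|R(\alpha(A))|=k\cdot 2^{p-3}\ge\frac1{23}k\sqrt{2}^{\,n}$, completing the proof.
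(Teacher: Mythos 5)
Your proof is correct and follows essentially the same route as the paper: pseudo-Anosov via the inclusion $R(\alpha(A))\subset\alpha(\mathcal M_{k,n})$ and the absence of standard reduction systems (Lemma~\ref{lem:BinaryBraid}(2) together with \cite{BNG}), and the count $k\cdot 2^{p-4}\cdot 2=k\cdot 2^{p-3}$ from the three kinds of minimal $R$-conjugators (cycling, switching, initializing). You in fact supply some details the paper leaves implicit, notably the explicit exclusion of periodicity and the verification of the constant $23$ in the odd case.
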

\begin{proof}
Since $R(\alpha(A))\subset\alpha(\mathcal{M}_{k,n}^0\cup\tau(\mathcal{M}_{k,n}^0))
\subset\alpha(\mathcal{M}_{k,n})$ and none of braids in $\alpha(\mathcal{M}_{k,n})$ has a standard reduction system by Lemma~\ref{lem:BinaryBraid}(2), all of braids in $R(\alpha(A))$ is pseudo-Anosov according to \cite{BNG}.

To compute the size of the rigid conjugacy set $R(\alpha(A))$, it is enough to consider the contributions of cycling, switching, and initializing.
By the definition of $\mathcal{M}_{k,n}^0$, it is easy to see that cyclings contribute the factor $k$.

Switching operates on constant columns from the 2nd to $(p-3)$rd for $\alpha(\mathcal{M}_{k,n}^0)$ and from 4th to $(p-1)$st for $\alpha(\tau(\mathcal{M}_{k,n}^0))$ and can generate any sequence of $\mathbf{0}$'s and $\mathbf{1}$'s. Since the length of sequences is $p-4$, switchings contribute the factor $2^{p-4}$.

Since $n\ge 14$, $\mathcal{M}_{k,n}^0\cap\tau(\mathcal{M}_{k,n}^0)=\emptyset$ and so an initialization contributes the factor 2.
By multiplying all these factors, we have the desired formula for $|R(\alpha( A))|$.
\end{proof}


\begin{thebibliography}{00}
\bibitem{Art} E.~Artin, {\em Theorie der Zopfe}, Hamburg Abh. {\bf 4} (1925), 47--72.
\bibitem{BNG} D.~Bernardete, Z.~Nitecki, M.~Guti\'errez, {\em Braids and the Nielsen-Thurston classification}, J. Knot Theory Ramifications {\bf 4} (1995), no. 4, 549--618.
\bibitem{BGG1} J.~S.~Birman, V.~Gebhardt, J.~Gonz\'alez-Meneses, {\em Conjugacy in Garside Groups I : Cyclings, Powers, and Rigidity}, Groups Geom. Dyn. {\bf 1} (2007), no. 3, 221--279.
\bibitem{BGG3} J.~S.~Birman, V.~Gebhardt, J.~Gonz\'alez-Meneses, {\em Conjugacy in Garside Groups III : Periodic braids}, J. Algebra {\bf 316} (2007), no. 2, 746--776.
\bibitem{EM} E.~A.~Elrifai, H.~R.~Morton, {\em Algorithms for positive braids}, Quart. J. Math. Oxford Ser. (2)  {\bf 45}  (1994),  no. 180, 479--497.
\bibitem{FG} N.~Franco, J.~Gonz\'alez-Meneses, {\em Conjugacy problem for braid groups and Garside groups}, J. Algebra {\bf 266} (2003), no. 1, 112-132.
\bibitem{Gar} F.~A.~Garside, {\em The braid group and other groups}, Quart. J. Math. Oxford Ser. (2) {\bf 20} (1969) 235--254.
\bibitem{Geb} V.~Gebhardt, {\em A new approach to the conjugacy problem in Garside groups}, J. Algebra {\bf 292} (2005), 282--302.
\bibitem{GG} V.~Gebhardt, J.~González-Meneses, {\em The cyclic sliding operation in Garside groups}, Math. Z. 265 (2010), no. {\bf 1}, 85--114.
\bibitem{GG2} V.~Gebhardt, J.~González-Meneses, {\em Solving the conjugacy problem in Garside groups by cyclic sliding}, J. Symbolic Comput. 45 (2010), no. 6, 629--656.
\bibitem{Gon} J.~Gonz\'alez-Meneses, {\em The $n$th root of a braid is unique up to conjugacy}, Algebr. Geom. Topol. {\bf 3} (2003), 1103--1118.
\bibitem{Gon2} J.~Gonz\'alez-Meneses, {\em On reduction curves and Garside properties of braids}, Topology of algebraic varieties and singularities, 227--244, Contemp. Math., 538, Amer. Math. Soc., Providence, RI, 2011.
\bibitem{KL} K.~H.~Ko, J.~W.~Lee, {\em A fast algorithm for the conjugacy problem on generic braids}, On the proceedings of Knot theory for Scientific Objects, 201--212, March, 2006, Osaka, Japan
\bibitem{Lee} S.~J.~Lee, {\em Algorithmic solutions to decision problems in the braid groups}, Ph. D. Thesis in KAIST (2000).
\bibitem{LL} E.~Lee, S.~Lee, {\em A Garside-theoretic approach to the reducibility problem in braid groups}, J. Algebra {\bf 320} (2008), no. 2, 783--820.
\bibitem{MM1} H.~A.~Masur, Y.~N.~Minsky, {\em Geometry of the complex of curves I : Hyperbolicity}, Invent. Math. {\bf 138} (1999), no. 1, 103--149.
\bibitem{MM2} H.~A.~Masur, Y.~N.~Minsky, {\em Geometry of the complex of curves II : Hierarchical structure}, Geom. Funct. Anal. {\bf 10} (2000), no. 4, 902--974.
\bibitem{Pr} M.~Prasolov, {\em Small braids having a big ultra summit set},  Mat. Zametki 89 (2011), no. 4, 577--588; translation in Math. Notes 89 (2011), no. 3-4, 545--55.
\bibitem{Thu1} W.~P.~Thurston, {\em On the topology and geometry of diffeomorphisms of surfaces}, Bull. Amer. Math. Soc. {\bf 19} (1988), 109--140.
\end{thebibliography}
\end{document}